\documentclass[12pt]{article}
\usepackage[english]{babel}
\usepackage[letterpaper,top=2cm,bottom=2cm,left=2cm,right=2cm,marginparwidth=2cm]{geometry}

\usepackage[utf8]{inputenc}

\usepackage{amsmath}         
\usepackage{amsthm}  
\usepackage{amsfonts}
\usepackage{mathrsfs}
\usepackage{mathtools}
\usepackage{amssymb}

\renewcommand{\emptyset}{\varnothing} 

\newtheorem{theorem}{Theorem}[section]
\newtheorem{lemma}[theorem]{Lemma}
\newtheorem{proposition}[theorem]{Proposition}
\newtheorem{corollary}[theorem]{Corollary}

\theoremstyle{definition}
\newtheorem{definition}[theorem]{Definition}

\theoremstyle{remark}
\newtheorem*{remark}{Remark}

\usepackage{array}
\usepackage{makecell}

\usepackage{enumerate}

\usepackage{xcolor}
\definecolor{Green}{RGB}{69, 135, 51} 
\definecolor{Red}{RGB}{255, 0, 0}
\definecolor{gray}{RGB}{59, 59, 59} 
\newcommand{\Green}[1]{\textcolor{Green}{#1}}
\newcommand{\Red}[1]{\textcolor{Red}{#1}}
\usepackage[colorlinks=true, allcolors=blue]{hyperref}

\usepackage[normalem]{ulem}
\usepackage{csquotes}

\usepackage[symbol]{footmisc}
\renewcommand{\thefootnote}{\fnsymbol{footnote}}

\usepackage{comment}
\usepackage{framed}
\usepackage[style=3]{mdframed}
\mdfsetup{pstrickssetting={linestyle=dashed}}
\usepackage{verbatim}

\newcommand{\ZZ}{\mathbb{Z}} 

\newcommand{\oo}{\mathbf{o}}

\newcommand{\VV}{\mathbb{V}} 

\newcommand{\set}[1]{\left\{ #1 \right\}}
\newcommand{\abs}[1]{\left| #1 \right|}
\newcommand{\tup}[1]{\left( #1 \right)}

\newcommand{\Or}{\mathcal{O}}

\newcommand{\out}{\mathscr{O}}
\newcommand{\tu}{\mathscr{T}}

\usepackage{datetime}   
\newdateformat{monthyear}{\monthname[\THEMONTH] \THEYEAR}
\date{\monthyear\today}

\title{Degree Sequences vs. Forests in Finite Graphs}
\author{Benjamin Liber\footnote{Drexel University, Philadelphia, PA. \url{bl839@drexel.edu}}}
\date{\monthyear\today}

\begin{document}
\maketitle

\begin{abstract}
    We prove two conjectures of Shteiner and Shteyner stating that for an undirected graph $G=(V,E)$, the number of degree sequences arising from its spanning subgraphs is at least the number of forests in $G$, with equality if and only if $G$ is bipartite. In the process of proving the bipartite case, we provide several equivalent evaluations of the Tutte polynomial $T_G(x,y)$ at $(2,1)$, including interpretations in terms of degree vectors obtained from orientations of $G$. For the non-bipartite case, we prove strict inequality by expressing degree sequences as subset sums of signless incidence vectors and comparing these with linearly independent edge sets, showing that the presence of odd cycles yields additional independent sets beyond forests. We further strengthen this result by introducing odd pseudoforests, showing that their number is bounded above by the number of degree sequences and characterizing the corresponding independent sets accordingly.
\end{abstract}

\renewcommand{\thefootnote}{\arabic{footnote}}

In recent work on counting matrices \cite{shteiner2025comparingnumberssubforestssubgraphdegreetuples}, Shteiner and Shteyner have proposed two conjectures on the enumeration of degree sequences of subgraphs of a given graph.
Since different subgraphs can have the same degree sequence, this problem is nontrivial.
Shteiner and Shteyner have conjectured that the number of degree sequences of subgraphs of a bipartite graph $G$ equals the $(2,1)$-value of its Tutte polynomial (also known to be the number of forests in $G$),
and furthermore, that this equality becomes an inequality for non-bipartite $G$.

We prove both conjectures. In Section~\ref{sec:intro}, we introduce the relevant definitions and state the conjectures. In Section~\ref{sec:bipart}, we prove the first conjecture (the bipartite case) by developing several equivalent descriptions of the evaluation $T_{G}(2,1)$ of the Tutte polynomial, relating it to indegree, outdegree, and score vectors of orientations of a bipartite graph $G$. The argument relies on a related result for score vectors of orientations found by Stanley in 1980 \cite{Stanley1980,BrylawskiOxley1992,KleitmanWinston}, whose proof we include for the sake of completeness (following Brylawski and Oxley \cite{BrylawskiOxley1992} but adding more detail). In Section~\ref{sec.nbp}, we prove the second conjecture (the non-bipartite case) using an argument originating from the AI system called \url{bolzano.app} (see \cite{Bolzano26}) and shared with us by V\'aclav Rozho\v{n}.
At the core of the proof is an elegant lemma (Lemma~\ref{vector-list}) about finite lists of vectors, saying that the number of distinct sums of sublists of such a list is never smaller than the number of linearly independent sublists.
Although our contribution to these proofs is minimal, we believe that they deserve wider circulation.

Finally, in Section~\ref{sec.opf}, we extend the ideas of Section~\ref{sec.nbp} to obtain a strengthening of the main result in terms of odd pseudoforests. This direction was suggested by the same AI system. We develop it in full to show that the number of odd pseudoforests in a finite graph is bounded above by the number of degree sequences, and we also characterize the corresponding linearly independent sublists.

\section{Introduction}\label{sec:intro}

\begin{definition}\label{firstdef}
    Let $G=(V,E)$ be an undirected graph\footnote{We shall use standard terminology in graph theory, as described (e.g.) in \cite{BondyMurty} or \cite{Ruohonen}. All graphs in this note are also assumed to be \textbf{finite}.} with vertices $V = \{v_1,\ldots,v_n\}$, and let $H=(V,F)$ be a spanning\footnote{A \textbf{spanning} subgraph of $G$ means a subgraph that has the same vertex set as $G$.} subgraph of $G$, where $F \subseteq E$. We allow parallel edges and loops\footnote{A \textbf{loop} is an edge with only one endpoint. Note that loops contribute twice to vertex degrees.}.
    \begin{enumerate}
        \item The \textbf{degree sequence} (or degree function) of $H$ is the ordered $n$-tuple \begin{align*}
            \deg_H := (\deg_H(v_1),\ldots,\deg_H(v_n)),
        \end{align*} where $\deg_H(v_i)$ is the degree of $v_i$ in the graph $H$ (that is, the \#  of edges in $F$ incident to $v_i$, where loops are counted twice) for $i \in \{1,\ldots,n\}$.
        \item We say that $H$ is a \textbf{forest} in $G$ if it contains no cycles.
    \end{enumerate}
	For ease of notation, we will sometimes write an edge $e \in E$ connecting vertices $v_i,v_j \in V$ as $\{v_i,v_j\}$, even if there are several such edges.
\end{definition}

The goal of this note is to prove the following conjectures of Shteiner and Shteyner \cite[Hypothesis~2]{shteiner2025comparingnumberssubforestssubgraphdegreetuples}:

\begin{framed}
\begin{theorem}\label{maintheorem}
   Let $G=(V,E)$ be an undirected graph.
   \begin{enumerate}
       \item[(a)] If $G$ is bipartite, then \begin{align*}
        (\# \text{ of forests in $G$}) =(\# \text{ of degree sequences $\deg_H$ of spanning subgraphs $H \subseteq G$}).
    \end{align*}
    \item[(b)] If $G$ is non-bipartite (i.e., has an odd cycle\footnotemark), then \begin{align*}
        (\# \text{ of forests in $G$}) <(\# \text{ of degree sequences $\deg_H$ of spanning subgraphs $H \subseteq G$}).
        \end{align*}
   \end{enumerate}
\end{theorem}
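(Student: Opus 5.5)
We will derive both parts from one lemma about subset sums of vectors, used twice. For each edge $e=\{v_i,v_j\}$ let $\chi_e\in\ZZ^n$ be its signless incidence vector: $\chi_e=\mathbf{e}_i+\mathbf{e}_j$ if $i\neq j$, and $\chi_e=2\mathbf{e}_i$ if $e$ is a loop at $v_i$. Then $\deg_H=\sum_{e\in F}\chi_e$. Hence the number of degree sequences equals the number of distinct subset sums of the list $(\chi_e)_{e\in E}$.

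\textbf{Step 1 (independence).} Over $\mathbb{Q}$, a set of edges has linearly independent signless incidence vectors exactly when every connected component contains no even cycle, no two odd cycles, and is a tree or a tree plus one edge creating an odd cycle. Loops count as odd cycles, and parallel edges as even $2$-cycles. This is the standard description of the signless incidence matroid, which is the even-cycle/bicircular-type matroid. The proof is a rank count: a connected component with $k$ vertices has rank $k-1$ if it is bipartite and rank $k$ otherwise. In particular, every forest is independent. If $G$ is bipartite, the independent sets are exactly the forests.

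\textbf{Step 2 (key lemma).} For any finite list $w_1,\dots,w_m$ of vectors, the number of distinct subset sums is at least the number of linearly independent sublists. We will prove this by induction on $m$ via deletion. Write $S(L)$ for the set of subset sums and $I(L)$ for the set of independent sublists. Then
\[
S(L)=S(L')\cup\bigl(w_m+S(L')\bigr), \qquad L'=(w_1,\dots,w_{m-1}),
\]
so $|S(L)|=2|S(L')|-|S(L')\cap(w_m+S(L'))|$. The independent sublists split into those avoiding $w_m$, counted by $|I(L')|$, and those containing $w_m$. The latter are in bijection with the independent sublists of $L'$ that remain independent modulo $w_m$, that is, with the independent sublists of the projected list $L'/w_m$.

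To make the induction close, I would strengthen the statement so it works uniformly after projecting. The plan is to compare $|S(L)|$ with $|S(L')|+|S(L'/w_m)|$. Project along $w_m$ with a generic linear functional, or pass to a quotient. Then $S(L)\setminus S(L')$ should inject into, or be at least as large as, the set of sums of $L'/w_m$. Concretely: pick a linear functional $\varphi$ with $\varphi(w_m)>0$, or handle $w_m=0$ trivially. The sums of $L$ fibre over $S(L'/w_m)$ under the quotient map. Each fibre is a set of points on a line parallel to $w_m$ that is closed under ``$+w_m$ when possible''. So each fibre of $S(L)$ has one more element than the corresponding fibre of $S(L')$ whenever that fibre is nonempty, since the maximal element of a fibre plus $w_m$ is new. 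This gives $|S(L)|\ge|S(L')|+|S(L'/w_m)|$, and induction then finishes. I expect this fibre argument to be the main obstacle, and it needs careful bookkeeping.

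\textbf{Step 3 (bipartite equality).} Step 2 gives $\#\text{deg seqs}\ge\#\text{forests}$. For the reverse inequality in the bipartite case, orient every edge from part $A$ to part $B$. Then the map $F\mapsto\deg_{(V,F)}$ coincides with the indegree and outdegree data of the orientation obtained by reversing the edges of $F$. Degree sequences therefore biject with the outdegree vectors of orientations of $G$. By Stanley's theorem, the number of such score vectors is $T_G(2,1)$, the number of forests.

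\textbf{Step 4 (non-bipartite strictness).} Take an odd cycle $C$ in $G$; a loop counts. Its edge set is independent by Step 1 but is not a forest. Hence $\#\text{independent}>\#\text{forests}$, and Step 2 gives the strict inequality.
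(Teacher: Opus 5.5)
Your proposal is correct and follows the paper's proof. Part (b) matches the paper exactly: subset sums dominate independent sublists by the same quotient-and-maximal-element fibre induction, combined with independence of forests and of an odd cycle. Part (a) reduces, as in the paper, to Stanley's count of outdegree vectors; your bijection $F \mapsto$ (orientation reversing $F$) just compresses the paper's route through score vectors of spanning subdigraphs.

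Two small fixes. Each fibre gains \emph{at least} one new element, not exactly one. Also, you cite Stanley's theorem, whereas the paper includes a deletion--contraction proof of it.
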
 
\end{framed} \footnotetext{A cycle is said to be \textbf{odd} if it has an odd number of edges. Otherwise, it is said to be \textbf{even}.}

It is important to note that distinct spanning subgraphs of a graph $G$ may have identical degree sequences. For example, if $G=C_4$, the cycle graph on four vertices, then the following two spanning subgraphs both have degree sequence $(1,1,1,1)$:
\begin{align*}
    H_1 := \Big(V, \big\{ \{v_1,v_2\},\{v_3,v_4\}\big\}\Big) \quad \text{and} \quad H_2 := \Big(V,\big\{\{v_1,v_4\},\{v_2,v_3\} \big\}\Big).
\end{align*}
Consequently, the number of spanning subgraphs of $G$ (which is $2^{\abs{E}}$) need not equal the number of distinct degree sequences of spanning subgraphs of $G$.

\begin{remark}
We have learned that Theorem~\ref{maintheorem}(a) has already been known to Stanley, appearing in his paper \cite[Corollary 5.4, last sentence]{Stanley1991}.
\end{remark}

\section{The Bipartite Case}\label{sec:bipart}

The proof of Theorem \ref{maintheorem}(a) will rely on several key results. We begin by introducing the Tutte polynomial $T_G(x,y)$ of a graph $G$, and we show that the number of forests in $G$ is given by evaluating this polynomial at the point $(2,1)$. Next, we discuss orientations of graphs, along with their associated indegree, outdegree, and score vectors. Finally, we establish a series of equalities connecting $T_G(2,1)$ to these vectors, which will form the backbone of the proof of Theorem \ref{maintheorem}(a).

\subsection{The Tutte Polynomial}

An edge $e$ of a graph $G$ is called a \textbf{bridge} if the removal of this edge from $G$ disconnects its two endpoints (i.e., its two endpoints belong to different connected components of the resulting graph).

The Tutte polynomial is a fundamental invariant of a graph, which is defined as follows:

\begin{definition}
    The \textbf{Tutte polynomial} of a graph $G=(V,E)$ is
    \begin{align*}
        T_G(x,y) = \sum_{A \subseteq E}(x-1)^{k(A)-k(E)}(y-1)^{k(A)+|A|-|V|}
        \in \ZZ[x,y]
    \end{align*}
    where $k(A)$ denotes the \textbf{number of connected components} of the graph $(V,A)$.
	Note that both exponents are nonnegative: this is clear for $k(A)-k(E)$, whereas for $k(A)+|A|-|V|$ it follows from Lemma~\ref{kE-ge} below.

    Alternatively, the Tutte polynomial can be defined recursively via a \textbf{deletion-contraction recurrence}, as seen in \cite[Theorem 6.2.2 and (6.13)]{BrylawskiOxley1992}:
	For any edge $e \in E$, we have
    \begin{align}
        T_G(x,y) = \begin{cases}
            T_{G-e}(x,y) + T_{G/e}(x,y), & \text{if $e$ is neither a loop nor a bridge;} \\
            xT_{G-e}(x,y), & \text{if $e$ is a bridge;} \\
            yT_{G-e}(x,y), & \text{if $e$ is a loop;} \\
            1, & \text{if $G$ has no edges,}
        \end{cases}
		\label{eq.tutte.dc}
    \end{align}
    where $G-e = (V,E\setminus \{e\})$ denotes the graph obtained from $G$ by \textbf{deleting} the edge $e$ and $G/e = (V(G/e),E(G/e))$ denotes the graph obtained from $G$ by \textbf{contracting} the edge $e$. That is, if $e$ joins vertices $v_i$ and $v_j$, then we merge the endpoints into a single vertex $v_{ij}$, and any edge incident to $v_i$ or $v_j$ in $G$ becomes incident to $v_{ij}$ in $G/e$. Formally, \begin{align*}
        V(G/e) = (V \setminus \{v_i,v_j\}) \cup \{v_{ij}\}.
    \end{align*}
\end{definition}

Many classical graph invariants are obtained as evaluations of $T_G$ at specific points $(x,y)$. Before stating the particular evaluation we will use in our proof of Theorem \ref{maintheorem}(a), let us note the following two well-known facts:

\begin{lemma}\label{kE-ge}
    For each graph $G=(V,E)$, we have $k(E) \ge |V|-|E|$.
\end{lemma}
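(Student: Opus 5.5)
We will prove the inequality $k(E) \ge |V|-|E|$ by induction on $|E|$, adding the edges of $G$ one at a time. Throughout, $k(A)$ denotes the number of connected components of the spanning subgraph $(V,A)$.

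\emph{Base case.} For $|E|=0$, the graph $(V,\emptyset)$ has exactly $|V|$ components, since each vertex is its own component. Thus $k(\emptyset)=|V|$, and the inequality holds with equality.

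\emph{Inductive step.} Let $|E|\ge 1$, pick any edge $e\in E$, and set $E'=E\setminus\{e\}$. By the induction hypothesis applied to $(V,E')$,
\begin{align*}
k(E') \ge |V|-|E'| = |V|-|E|+1 .
\end{align*}
It therefore suffices to show $k(E)\ge k(E')-1$, that is, adding the single edge $e$ to $(V,E')$ lowers the number of components by at most one. This is the only step requiring any care; it comes down to the following observation.

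\emph{The observation.} Let $e$ have endpoints $u,w$ (with $u=w$ allowed if $e$ is a loop). Any connected component of $(V,E')$ that contains neither $u$ nor $w$ is still a connected component of $(V,E)$. Indeed, it is connected, and no edge of $E$ leaves it, because the only new edge $e$ is not incident to it. The components of $(V,E')$ containing $u$ or $w$ are at most two in number, and in $(V,E)$ they lie in a single component, since $e$ joins them. Hence at most two old components merge into one, and all other components are unchanged. Consequently $k(E)\in\{k(E'),\,k(E')-1\}$: if $u$ and $w$ were already in the same component of $(V,E')$, which covers loops and parallel edges, then $k(E)=k(E')$; otherwise $k(E)=k(E')-1$.

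\emph{Conclusion.} Combining the two displays gives
\begin{align*}
k(E) \ge k(E')-1 \ge |V|-|E| ,
\end{align*}
which completes the induction. Applied to an arbitrary subset $A\subseteq E$, that is, to the graph $(V,A)$, this yields the nonnegativity of the exponent $k(A)+|A|-|V|$ in the definition of $T_G$.
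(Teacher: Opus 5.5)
Your proof is correct, but it takes a different route from the paper. The paper splits $G$ into its connected components $G_i=(V_i,E_i)$. It then cites the standard fact that a connected graph satisfies $|E_i|\ge |V_i|-1$, with equality if and only if it is a tree, and sums over the components to obtain the chain \eqref{eq.pf.kE-ge}. You instead induct on $|E|$ and show directly that adding a single edge lowers the number of components by at most one. That is essentially the argument behind the fact the paper takes for granted, so your version is more self-contained and assumes nothing about trees.

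What the paper's route buys is the equality case at no extra cost. In the proof of Lemma~\ref{forest_iff}, equality in \eqref{eq.pf.kE-ge} is read off termwise from the equality case $|E_i|=|V_i|-1$ for connected graphs. To get the same characterization from your induction, you would need one more observation. Fix the order in which edges are added. Then every step lowers the count by exactly one if and only if no edge has its endpoints already connected when it is added. This in turn holds if and only if $(V,E)$ has no cycle: an edge whose endpoints are already joined by a path closes a cycle, and conversely the last-added edge of any cycle has its endpoints already joined.
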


\begin{proof}
    Let $G_1,\ldots,G_k$ denote the connected components of $G$, where $k=k(E)$, and write $G_i = (V_i,E_i)$ for all $i \in \{1,\ldots,k\}$. 

	Since each component of $G$ is connected, $|E_i| \geq |V_i|-1$ for all $i \in \{1,\ldots,k\}$, with equality if and only if $G_i$ is a tree. So,
	\begin{align}
        |E| = \sum_{i=1}^k|E_i| \geq \sum_{i=1}^k(|V_i|-1) = \Big(\sum_{i=1}^k|V_i|\Big)-k = |V|-k = |V|-k(E).
		\label{eq.pf.kE-ge}
    \end{align}
	In other words, $k(E) \ge |V|-|E|$.
\end{proof}

\begin{lemma}\label{forest_iff}
    A graph $G=(V,E)$ is a forest if and only if $k(E) = |V|-|E|$.
\end{lemma}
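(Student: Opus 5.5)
The plan is to reuse the component decomposition from the proof of Lemma~\ref{kE-ge}. Let $G_1,\ldots,G_k$ be the connected components of $G$, with $k=k(E)$ and $G_i=(V_i,E_i)$. The chain \eqref{eq.pf.kE-ge} shows that $|E| = |V|-k(E)$ holds if and only if equality holds in every summand, that is, $|E_i| = |V_i|-1$ for all $i$. This uses that each term satisfies $|E_i|\ge |V_i|-1$, so a sum of nonnegative differences vanishes only when each difference vanishes. In that proof we noted that, for a connected graph $G_i$, the equality $|E_i|=|V_i|-1$ holds exactly when $G_i$ is a tree. Hence $k(E)=|V|-|E|$ if and only if every component of $G$ is a tree. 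Since a graph has no cycles exactly when each of its components has no cycles, this is the same as saying that $G$ is a forest.

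The only substantive ingredient is the fact used inside the proof of Lemma~\ref{kE-ge}: a connected graph $C=(W,D)$ satisfies $|D|\ge |W|-1$, with equality if and only if $C$ is acyclic. I would justify it as follows, allowing loops and parallel edges.

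For the inequality, take a spanning tree of $C$. It exists because repeatedly deleting a non-bridge edge (one lying on a cycle, which includes loops and parallel edges) preserves connectivity. A tree on $|W|$ vertices has exactly $|W|-1$ edges, which follows by induction, removing a leaf.

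For the equality case, suppose first that $C$ is acyclic. Then no edge can be deleted in the spanning-tree construction, so $C$ is itself a tree and $|D|=|W|-1$. Conversely, suppose $C$ contains a cycle, possibly a loop or a pair of parallel edges. Deleting one edge $e$ of that cycle keeps $C-e$ connected. Applying the inequality to $C-e$ gives $|D|-1\ge |W|-1$, so $|D|>|W|-1$.

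I expect no real obstacle here. The only care needed is to handle loops and parallel edges as cycles, of length $1$ and $2$ respectively, so that the multigraph setting of Definition~\ref{firstdef} is covered.
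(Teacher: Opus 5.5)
Your proposal is correct and is essentially the paper's proof. Both arguments decompose $G$ into its components and use that equality in the summed chain \eqref{eq.pf.kE-ge} forces $|E_i| = |V_i|-1$ in every component, which for a connected graph is equivalent to being a tree. The only difference is that you also justify this standard fact (via spanning trees, including loops and parallel edges as cycles), whereas the paper takes it as known.
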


\begin{proof}
    Let $G_1,\ldots,G_k$ denote the connected components of $G$, where $k=k(E)$, and write $G_i = (V_i,E_i)$ for all $i \in \{1,\ldots,k\}$. 

    First, suppose $G$ is a forest. Then each connected component of $G$ is a tree, so that $|E_i| = |V_i|-1$ for $i \in \{1,\ldots,k\}$.
	Hence, the inequality \eqref{eq.pf.kE-ge} is an equality, since the $\ge$ sign was coming from $|E_i| \ge |V_i|-1$.
	In other words, $|E| = |V| - k(E)$, so that $k(E) = |V|-|E|$.

    Conversely, suppose $k(E) = |V|-|E|$.
	Then, $|E| = |V| - k(E)$, so that the inequality \eqref{eq.pf.kE-ge} is an equality.
	Hence, all the inequalities $|E_i| \ge |V_i|-1$ must be equalities as well (since \eqref{eq.pf.kE-ge} was obtained by summing these inequalities).
	In other words, $|E_i| = |V_i|-1$ for each $i$.
	Since each component $G_i = (V_i, E_i)$ of $G$ is connected, this entails that $G_i$ is a tree.
	Hence, each component of $G$ is a tree, so $G$ is a forest.
\end{proof}

As a consequence, we have the following evaluation of the Tutte polynomial:

\begin{proposition}
    \label{num_of_trees}
	Let $G = (V, E)$ be a graph.
	The number $T_G(2,1)$ is the number of forests in $G$.
\end{proposition}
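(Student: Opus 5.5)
We substitute $(x,y)=(2,1)$ directly into the subset-expansion definition of $T_G(x,y)$ and use Lemmas~\ref{kE-ge} and~\ref{forest_iff} to decide which summands survive. Here a forest in $G$ means a spanning subgraph $(V,A)$ with $A\subseteq E$ and no cycles, so forests are in bijection with edge subsets $A$ for which $(V,A)$ is acyclic.

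At $x=2$ the factor $(x-1)^{k(A)-k(E)}$ becomes $1^{k(A)-k(E)}=1$ for every $A\subseteq E$. This is legitimate because the exponent is a nonnegative integer: deleting edges never decreases the number of components, so $k(A)\ge k(E)$.

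At $y=1$ the factor $(y-1)^{k(A)+|A|-|V|}$ becomes $0^{k(A)+|A|-|V|}$. Applying Lemma~\ref{kE-ge} to the graph $(V,A)$ shows that the exponent is a nonnegative integer. With the convention $0^0=1$, which is the one forced by $T_G$ being a polynomial, this factor equals $1$ when the exponent is $0$ and equals $0$ otherwise. Hence
\[
T_G(2,1)=\#\set{A\subseteq E \;:\; k(A)=|V|-|A|}.
\]

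By Lemma~\ref{forest_iff} applied to $(V,A)$, the condition $k(A)=|V|-|A|$ holds exactly when $(V,A)$ is a forest. Therefore $T_G(2,1)$ counts the spanning forests of $G$.

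There is no real obstacle in this argument. The only points needing care are the $0^0=1$ convention, which is justified because we evaluate a polynomial with nonnegative exponents, and the treatment of loops. A loop forms a cycle, and adding one raises $|A|$ without changing $k(A)$, so any $A$ containing a loop is correctly excluded by the count.
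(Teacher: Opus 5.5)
Your proof is correct and follows essentially the same route as the paper: substitute $(2,1)$ into the subset expansion, use $0^0=1$ to reduce to counting $A\subseteq E$ with $k(A)=|V|-|A|$, and apply Lemma~\ref{forest_iff}. Your added justifications, that both exponents are nonnegative and that edge sets containing loops are automatically excluded, are accurate.
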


\begin{proof}
    By the first definition of $T_G$, we have \begin{align}
        T_G(2,1) &= \sum_{A \subseteq E}(2-1)^{k(A)-k(E)}(1-1)^{k(A)+|A|-|V|} \nonumber\\
        &= \sum_{A \subseteq E}1^{k(A)-k(E)}0^{k(A)+|A|-|V|} \nonumber\\
        &= \sum_{A \subseteq E}0^{k(A)+|A|-|V|}\label{0inTG}.
    \end{align} By convention, $0^0 = 1$. So, \begin{align*}
        0^{k(A)+|A|-|V|} = \begin{cases}
            1, & \text{if } k(A)+|A|-|V|=0; \\
            0, & \text{otherwise.}
        \end{cases}
    \end{align*} Thus, \eqref{0inTG} reduces to \begin{align}
        T_G(2,1) = \sum_{\substack{A \subseteq E \text{ such that} \\ k(A)+|A|-|V|=0}}1 = \sum_{\substack{A \subseteq E \text{ such that} \\ k(A) = |V|-|A|}}1. \label{0inTG2}
    \end{align} By Lemma \ref{forest_iff}, $(V,A)$ is a forest if and only if $k(A) = |V|-|A|$. And so, \eqref{0inTG2} becomes \begin{align*}
        T_G(2,1) = \sum_{\substack{A \subseteq E \text{ such that} \\ (V,A) \text{ is a forest}}}1 = (\# \text{ of forests in $G$}). \tag*{\qedhere}
    \end{align*}
\end{proof}

\begin{corollary}\label{tgnotloop}
    Let $G = (V, E)$ be a graph.
    If $e \in E$ is not a loop, then \begin{align*}
        T_G(2,1) = T_{G-e}(2,1)+T_{G/e}(2,1).
    \end{align*}
\end{corollary}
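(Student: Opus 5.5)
We split according to whether the non-loop edge $e$ is a bridge. If $e$ is neither a loop nor a bridge, the first case of the deletion-contraction recurrence \eqref{eq.tutte.dc} gives $T_G(x,y) = T_{G-e}(x,y) + T_{G/e}(x,y)$ as polynomials. Specializing at $(x,y)=(2,1)$ yields the claim directly.

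If $e$ is a bridge, \eqref{eq.tutte.dc} gives $T_G(2,1) = 2\,T_{G-e}(2,1)$. It therefore suffices to show $T_{G/e}(2,1) = T_{G-e}(2,1)$ when $e$ is a bridge. We will show this by Proposition~\ref{num_of_trees}, i.e.\ by exhibiting a bijection between forests of $G-e$ and forests of $G/e$. Since $e$ is a bridge, no cycle of $G$ passes through $e$. Hence the edge sets $E(G-e)$ and $E(G/e)$ both identify with $E\setminus\{e\}$, and we claim that a subset $A \subseteq E\setminus\{e\}$ is acyclic in $G-e$ if and only if it is acyclic in $G/e$.

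The direction from $G-e$ to $G/e$ is the only step needing care, since contraction could in principle create a new cycle. Suppose $A$ contained a cycle in $G/e$ that does not come from a cycle in $G-e$. That cycle must pass through the merged vertex $v_{ij}$ and lift to a path in $G-e$ from $v_i$ to $v_j$. Such a path contradicts $e$ being a bridge, because removing $e$ would not disconnect its endpoints. The converse direction is easier: a cycle in $G-e$ maps to a closed walk in $G/e$. Because $e$ is a bridge, $v_i$ and $v_j$ lie in different components of $G-e$, so at most one of them meets the cycle, and the image is again a cycle.

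With the bijection established, Proposition~\ref{num_of_trees} gives $T_{G/e}(2,1)=T_{G-e}(2,1)$. Therefore
\[
T_G(2,1)=2T_{G-e}(2,1)=T_{G-e}(2,1)+T_{G/e}(2,1),
\]
which completes the bridge case. Alternatively, one could avoid the bijection by citing the known identity $T_{G/e}=T_{G-e}$ for a bridge $e$; we prefer the self-contained forest count.
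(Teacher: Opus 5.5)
Your proof is correct, but it takes a different route from the paper.

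\textbf{Your route.} You invoke the full polynomial recurrence \eqref{eq.tutte.dc}, which the paper only quotes from Brylawski and Oxley. This forces a case split. For non-bridges the claim is immediate. For bridges you reduce to $T_{G/e}(2,1)=T_{G-e}(2,1)$, which you prove via Proposition~\ref{num_of_trees} by showing that $A\subseteq E\setminus\{e\}$ is acyclic in $G-e$ if and only if it is acyclic in $G/e$. Both directions of that check are sound: any new cycle created by contraction would lift to a $v_i$--$v_j$ path in $G-e$, and a cycle of $G-e$ cannot meet both endpoints of a bridge.

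\textbf{The paper's route.} The paper never uses \eqref{eq.tutte.dc} here and never splits into cases. It specializes the subset expansion to $T_G(2,1)=\sum_{A\subseteq E}0^{k(A)+|A|-|V|}$ and notes that $k(E)$ has dropped out. It then splits the sum according to whether $e\in A$:
\begin{itemize}
\item the terms with $e\notin A$ give $T_{G-e}(2,1)$;
\item the subsets with $e\in A$ are matched with subsets of $E(G/e)$ via $A\mapsto A\setminus\{e\}$. Under this map $k$ is unchanged while $|A|$ and $|V|$ each drop by one, so the exponent is preserved.
\end{itemize}

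\textbf{Comparison.} Your version reuses the cited recurrence and the forest count. The cost is a case split, a separate acyclicity argument, and dependence on an external theorem whose standard proof is essentially the paper's computation for general $(x,y)$. The paper's version is self-contained. It also explains why bridges are not special at this evaluation: the bridge case of \eqref{eq.tutte.dc} differs from the generic case only because deleting a bridge raises $k(E)$, and $k(E)$ enters only through the factor $(x-1)^{k(A)-k(E)}$, which equals $1$ at $x=2$.

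A minor point: the identification $E(G/e)=E\setminus\{e\}$ holds for every non-loop edge. It does not follow from $e$ being a bridge, as your ``hence'' suggests. The bridge property is only needed afterwards, in the acyclicity comparison.
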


\begin{proof}
    By (\ref{0inTG}), we have \begin{align*}
        T_{G}(2,1) = \sum_{A \subseteq E}0^{k(A)+|A|-|V|}.
    \end{align*} Hence, $T_G(2,1)$ does not depend on $k(E)$. Moreover, we can split this sum as follows: \begin{align*}
        T_G(2,1) = \sum_{\substack{A \subseteq E; \\ e \notin A}}0^{k(A)+|A|-|V|} + \sum_{\substack{A \subseteq E; \\ e \in A}}0^{k(A)+|A|-|V|}.
    \end{align*}

    Now, let us note that \begin{align*}
        \sum_{\substack{A \subseteq E; \\ e \notin A}}0^{k(A)+|A|-|V|} = \sum_{A \subseteq E\setminus \{e\}}0^{k(A)+|A|-|V|} = T_{G-e}(2,1)
    \end{align*}
	(by \eqref{0inTG}, now applied to the graph $G-e$).

    On the other hand, for any $A \subseteq E$ with $e \in A$, let $A' \subseteq E(G/e)$ denote the set $A \setminus \{e\}$, regarded as a set of edges of $G/e$ (so any edge containing an endpoint of $e$ will now contain the contracted vertex in $G/e$ instead).
	The map $A \mapsto A'$ is bijective because this process is reversible: every subset of $E(G/e)$ uniquely arises from a subset of $E$ containing $e$.

    Clearly, $|A'| = |A|-1$. Moreover, $k(A') = k(A)$, since any two vertices that are connected in $(V, A)$ are still connected in $(V(G/e), A')$ (as the removal of the edge $e$ is neutralized by the merging of its two endpoints) and vice versa (since $e \in A$ ensures that the two endpoints of $e$ are connected in $(V, A)$).
	Finally, since $e$ is not a loop, it joins two distinct vertices of $G$. Hence, contracting $e$ reduces the number of vertices by one, so that $|V(G/e)|=|V|-1$. Thus, \begin{align*}
        \underbrace{k(A')}_{\textcolor{gray}{=k(A)}}+\underbrace{|A'|}_{\textcolor{gray}{=|A|-1}}-\underbrace{|V(G/e)|}_{\textcolor{gray}{=|V|-1}} = k(A)+(|A|-1)-(|V|-1) = k(A)+|A|-|V|.
    \end{align*} So, reindexing the sum using the bijection $A \mapsto A'$, we see that \begin{align*}
        \sum_{\substack{A \subseteq E; \\ e \in A}}0^{k(A)+|A|-|V|} = \sum_{A' \subseteq E(G/e)}0^{k(A')+|A'|-|V(G/e)|} = T_{G/e}(2,1)
    \end{align*}
	(by \eqref{0inTG}, now applied to the graph $G/e$).
    
    Altogether, \begin{align*}
        T_{G}(2,1) &= \underbrace{\sum_{\substack{A \subseteq E; \\ e \notin A}}0^{k(A)+|A|-|V|}}_{\textcolor{gray}{=T_{G-e}(2,1)}}+\underbrace{\sum_{\substack{A \subseteq E; \\ e\in A}}0^{k(A)+|A|-|V|}}_{\textcolor{gray}{=T_{G/e}(2,1)}} = T_{G-e}(2,1)+T_{G/e}(2,1).
		\qedhere
    \end{align*}
\end{proof}

Thus, the value $T_G(2,1)$ of the Tutte polynomial satisfies a simpler form of the deletion-contraction recurrence \eqref{eq.tutte.dc}, namely:
\begin{align}
    T_{G}(2,1) = \begin{cases}
        T_{G-e}(2,1)+T_{G/e}(2,1), & \text{if $e$ is not a loop;} \\
        T_{G-e}(2,1), & \text{if $e$ is a loop}; \\
        1, & \text{if $G$ has no edges.}
    \end{cases}
    \label{eq.tutte21.dc}
\end{align}
(Indeed, the first case follows from Lemma~\ref{tgnotloop}; the second follows from Proposition~\ref{num_of_trees} since a forest cannot contain a loop; and the third is trivial.)

\subsection{Orientations of Graphs}

From now on, let us fix an undirected graph $G=(V,E)$ with vertices $V=\{v_1,\ldots,v_n\}$. 

\begin{definition}
    An \textbf{orientation} $\mathcal{O}$ of $G$ is a map
    \begin{align*}
        \mathcal{O} : E \to V \times V
    \end{align*}
    such that for each edge $e \in E$, the pair $\Or\tup{e}$ consists of the two endpoints of $e$ in some order (i.e., if $e$ has endpoints $v$ and $w$, then $\Or\tup{e}$ is either $\tup{v,w}$ or $\tup{w,v}$).
    That is, $\mathcal{O}$ is an assignment of direction to each edge of $G$.
    Such an orientation $\Or$ gives rise to a directed graph $G_{\mathcal{O}} := \tup{V,E}$, where each $e \in E$ is now viewed as an arc (directed edge) from $v$ to $w$, where $\Or\tup{e} = \tup{v,w}$.
    We say that an arc $(v,w)$ is directed \textbf{into} vertex $w$ and directed \textbf{out of} vertex $v$. We also call vertex $v$ the \textbf{tail} of the arc and vertex $w$ the \textbf{head}.
\end{definition}

\begin{definition} Fix an orientation $\mathcal{O}$ of $G$.
    \begin{enumerate}
        \item The \textbf{indegree vector} (or indegree function) of $G_{\mathcal{O}}$ is the tuple \begin{align*}
            \deg_{G_{\mathcal{O}}}^- := (\deg_{G_{\mathcal{O}}}^-(v_1),\ldots,\deg_{G_\mathcal{O}}^-(v_n)),
        \end{align*} where $\deg_{G_\mathcal{O}}^-(v_i) = (\# \text{ of arcs in $G_\mathcal{O}$ directed into $v_i$)}$ denotes the \textbf{indegree} of a vertex $v_i$.
        \item The \textbf{outdegree vector} (or outdegree function) of $G_\mathcal{O}$ is the tuple \begin{align*}
            \deg_{G_\mathcal{O}}^+ := (\deg_{G_\mathcal{O}}^+(v_1),\ldots,\deg_{G_\mathcal{O}}^+(v_n)),
        \end{align*} where $\deg_{G_\mathcal{O}}^+(v_i) = (\# \text{ of arcs in $G_\mathcal{O}$ directed out of $v_i$})$ denotes the \textbf{outdegree} of a vertex $v_i$.
        \item The \textbf{score vector} (or score function) of $G_\mathcal{O}$ is the tuple \begin{align*}
            s_{G_\mathcal{O}} := (s_{G_{\mathcal{O}}}(v_1),\ldots,s_{G_{\mathcal{O}}}(v_n)),
        \end{align*} where $s_{G_{\mathcal{O}}}(v_i) = \deg_{G_{\mathcal{O}}}^+(v_i) - \deg_{G_{\mathcal{O}}}^-(v_i)$ denotes the \textbf{score} of a vertex $v_i$. 
        \item For any spanning subgraph $H=(V,F)$ of $G$, where $F \subseteq E$, we define the spanning subdigraph
		\[
		H_{\mathcal{O}} := (V,\{\mathcal{O}(e) : e \in F\})
		\]
		of $G_{\Or}$. We define indegree vectors, outdegree vectors, and score vectors for any spanning subdigraph of $G_{\Or}$ in the same way as we defined them for $G_{\Or}$ itself.
		
    \end{enumerate}
\end{definition} Sometimes, we say ``$G$ has an outdegree (indegree, score) vector $(o_1,\ldots,o_n)$" to mean there exists an orientation $\mathcal{O}$ of $G$ such that the outdegree (indegree, score) vector of the oriented graph $G_{\mathcal{O}}$ is $(o_1,\ldots,o_n)$.
Similarly, we sometimes refer to the outdegree (indegree, score) vector of $G_{\mathcal{O}}$ as the ``outdegree (indegree, score) vector of $\mathcal{O}$''.

The following result, originally due to Brylawski and Oxley \cite[Lemma~6.3.20]{BrylawskiOxley1992}, is stated and proven here in a form suitable to our setting:

\begin{lemma}\label{outdeg}
    Suppose that both $(o_1,o_2,o_3,o_4,\ldots,o_n)$ and $(o_1',o_2',o_3,o_4,\ldots,o_n)$ are outdegree vectors of $G$ with $o_2 < o_2'$.
    Then $(o_1-1,o_2+1,o_3,o_4,\ldots,o_n)$ is also an outdegree vector of $G$.
\end{lemma}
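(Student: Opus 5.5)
Let $\Or$ be an orientation with outdegree vector $(o_1,o_2,o_3,\ldots,o_n)$ and $\Or'$ one with outdegree vector $(o_1',o_2',o_3,\ldots,o_n)$. The plan is to compare the two orientations edge by edge and use the set $D \subseteq E$ of edges on which $\Or$ and $\Or'$ disagree. An edge whose orientation is the same under both contributes equally to every outdegree. Hence for every vertex $v$, the difference $\deg^+_{\Or'}(v)-\deg^+_{\Or}(v)$ equals the net flow of the subdigraph $D_{\Or'}$ at $v$. Equivalently, in the digraph $D_{\Or}$ (the disagreeing edges with their $\Or$-directions), the quantity $\deg^+_{D_{\Or}}(v)-\deg^-_{D_{\Or}}(v)$ equals $\deg^+_{\Or}(v)-\deg^+_{\Or'}(v)$, because reversing an arc swaps its contribution. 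Loops are irrelevant: they contribute $1$ to the outdegree of their vertex under either orientation, so we may put them outside $D$.

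Since the total number of arcs is the same, $o_1+o_2=o_1'+o_2'$. Write $d:=o_2'-o_2>0$, so $o_1-o_1'=d$. In the digraph $D_{\Or}$, vertex $v_1$ then has excess (out minus in) $d>0$, vertex $v_2$ has excess $-d<0$, and every other vertex $v_3,\ldots,v_n$ has excess $0$. The key step is the standard flow-decomposition argument. Start at $v_2$ and walk \emph{backwards} along arcs of $D_{\Or}$ (from head to tail), never reusing an arc. Whenever we enter a vertex $v_i$ with $i\ge 3$ through an incoming arc, balance guarantees an unused incoming arc to continue along. At $v_2$ itself, there are more incoming than outgoing arcs, so whenever we return to $v_2$ there is still an unused incoming arc. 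Thus the walk can only get stuck at $v_1$, and since $D$ is finite it must get stuck, so it ends at $v_1$. Read forwards, this gives a directed trail in $D_{\Or}$ from $v_1$ to $v_2$, which we shorten to a directed path $P$ from $v_1$ to $v_2$ by cutting out closed subwalks.

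Finally, reverse the arcs of $P$ in $\Or$ to get a new orientation $\Or''$. Reversing a directed path from $v_1$ to $v_2$ lowers the outdegree of $v_1$ by one and raises that of $v_2$ by one. Each interior vertex loses one outgoing arc and gains one, so its outdegree is unchanged. Hence $\Or''$ has outdegree vector $(o_1-1,o_2+1,o_3,\ldots,o_n)$, as required.

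The only delicate step is the existence of the path $P$, namely making the "walk can only get stuck at $v_1$" argument airtight. We must count used incoming versus outgoing arcs at each visited vertex carefully, and handle parallel edges, which cause no trouble since arcs are used individually. As an alternative, one can argue by contradiction: let $S$ be the set of vertices from which $v_2$ is reachable in $D_{\Or}$, and suppose $v_1\notin S$. No arc of $D_{\Or}$ leaves $S$, so the total excess over $S$ is $-(\text{number of arcs entering } S)\le 0$. On the other hand, that total equals $-d$ plus zeros, which is consistent, so this version needs to be set up differently. I would take the complementary set $T$ of vertices reachable from $v_1$ and assume $v_2\notin T$. Then no arc leaves $T$, so the sum of excesses over $T$ is at most $0$. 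But that sum is $d>0$, since $T$ contains $v_1$, omits $v_2$, and all other excesses are $0$. This contradiction is the cleaner route, and I would use it.
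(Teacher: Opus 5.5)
Your proof is correct and is essentially the paper's argument. Both show that $v_2$ lies in the set of vertices reachable from $v_1$ by a cut count: no arc leaves that set, so comparing outdegree sums over one side under $\mathcal{O}$ and $\mathcal{O}'$ contradicts $o_2<o_2'$. Both then reverse the resulting directed $v_1$--$v_2$ path in $\mathcal{O}$.

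The only difference is that you take reachability in the disagreement digraph $D_{\mathcal{O}}$, while the paper takes it in all of $G_{\mathcal{O}}$ and phrases the count as maximality of the green outdegree sum $S(\mathcal{P})$ at $\mathcal{P}=\mathcal{O}$. Your version gives the slightly stronger conclusion that the reversed path uses only edges on which $\mathcal{O}$ and $\mathcal{O}'$ disagree.

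Incidentally, your abandoned first attempt with $S$ also works: no arc \emph{enters} $S$ (rather than leaves it), so the total excess over $S$ is $\ge 0$, which contradicts its value $-d$.
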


\begin{proof}
    Let $\mathcal{O}$ and $\mathcal{O}'$ be two orientations of $G$ with outdegree vectors $(o_1,o_2,o_3,o_4,\ldots,o_n)$ and $(o_1',o_2',o_3,o_4,\ldots,o_n)$, respectively, and suppose $o_2 < o_2'$.

    Let us color the vertices $v_i \in V$ \Green{green} or \Red{red} by the following rule: a vertex $v_i \in V$ is colored \Red{red} if there exists a directed path in $G_{\mathcal{O}}$ from $v_1$ to $v_i$; otherwise, it is colored \Green{green}.
	Clearly, $v_1$ is \Red{red}.

	We say that an edge of $G$ is \Green{green}-\Green{green} if both of its endpoints are green.
	We also say that an edge of $G$ is \Green{green}-\Red{red} if its two endpoints have different colors.
	
	Note that if $(u, v)$ is an arc of $G_{\mathcal{O}}$ such that $u$ is \Red{red}, then $v$ must again be \Red{red} (the \Red{red}ness of $u$ means that there is a directed path in $G_{\mathcal{O}}$ from $v_1$ to $u$. So, we can concatenate this path with the arc $(u, v)$ to obtain a directed walk from $v_1$ to $v$; hence, there is also a directed path from $v_1$ to $v$, and thus, $v$ is also \Red{red}).
	In other words, an arc of $G_{\mathcal{O}}$ having a \Red{red} tail must also have a \Red{red} head.
	Consequently, any \Green{green}-\Red{red} edge of $G$ must have a \Green{green} tail in $G_{\mathcal{O}}$ (since otherwise, it would have a \Red{red} tail and thus -- according to the previous sentence -- also a \Red{red} head, contradicting the fact that it is a \Green{green}-\Red{red} edge).
    
	For any orientation $\mathcal{P}$ of $G$, let us set\footnote{Note that the colors \Green{green} and \Red{red} are still defined in terms of the orientation $\Or$, not $\mathcal{P}$.}
	\[
	S(\mathcal{P}) := \sum_{v_i \in V \text{ is \Green{green}}} \deg_{G_{\mathcal{P}}}^+(v_i).
	\]
	Thus, $S(\mathcal{P})$ is the sum of the outdegrees of all \Green{green} vertices in $G_{\mathcal{P}}$.
    In other words, $S(\mathcal{P})$ is the number of edges of $G$ whose tail in the oriented graph $G_{\mathcal{P}}$ is \Green{green}.
	These edges comprise
	\begin{enumerate}
	\item all the \Green{green}-\Green{green} edges of $G$, and
	\item those \Green{green}-\Red{red} edges of $G$ whose tail in $G_{\mathcal{P}}$ is \Green{green} (i.e., that are oriented from their \Green{green} endpoint to their \Red{red} one).
	\end{enumerate}
	In order to determine an orientation $\mathcal{P}$ that maximizes $S(\mathcal{P})$, we need to maximize the number of the latter edges (i.e., those \Green{green}-\Red{red} edges of $G$ whose tail in $G_{\mathcal{P}}$ is \Green{green}), since the former edges (i.e., the \Green{green}-\Green{green} edges of $G$) do not depend on $\mathcal{P}$.
	But clearly, the number of the latter edges is maximized for $\mathcal{P} = \Or$ (since we have shown that \textit{any} \Green{green}-\Red{red} edge of $G$ must have a \Green{green} tail in $G_{\mathcal{O}}$).
	Thus, the number $S(\mathcal{P})$ is maximized for $\mathcal{P} = \Or$. In other words,
	\[
	S(\mathcal{P}) \le S(\Or) \qquad \text{ for any orientation } \mathcal{P} \text{ of } G.
	\]
	In particular, $S(\mathcal{O}') \le S(\mathcal{O})$.
	
	Now, assume (for the sake of contradiction) that $v_2$ is \Green{green}.
    Recall that \begin{align*}
        \deg^+_{G_\mathcal{O}} = (o_1,o_2,o_3,o_4,\ldots,o_n) \quad \text{and} \quad \deg_{G_{\mathcal{O}'}}^+ = (o_1',o_2',o_3,o_4,\ldots,o_n).
    \end{align*}
	Thus, all vertices $v_i$ other than $v_1$ and $v_2$ have the same outdegree in $G_\mathcal{O}$ and $G_{\mathcal{O}'}$; that is, they satisfy $\deg_{G_{\mathcal{O}'}}^+(v_i) = \deg_{G_{\mathcal{O}}}^+(v_i)$.
	In particular, all \Green{green} vertices $v_i$ other than $v_2$ satisfy $\deg_{G_{\mathcal{O}'}}^+(v_i) = \deg_{G_{\mathcal{O}}}^+(v_i)$
	(since $v_1$ is \Red{red}).
	Hence, \begin{align*}
        S(\mathcal{O}') - S(\mathcal{O})
		=  \sum_{v_i \in V \text{ is \Green{green}}} \deg_{G_{\mathcal{O}'}}^+(v_i)
      		- \sum_{v_i \in V \text{ is \Green{green}}} \deg_{G_{\mathcal{O}}}^+(v_i)
		= o_2'-o_2
    \end{align*}
	(since the sums include the \Green{green} vertex $v_2$). 
	Therefore, $S(\mathcal{O}') - S(\mathcal{O}) =  o_2' - o_2 > 0$ (since $o_2 < o_2'$).
	So, $S(\mathcal{O}') > S(\mathcal{O})$, which contradicts $S(\mathcal{O}') \le S(\mathcal{O})$.
	Thus, our assumption was false, so that $v_2$ cannot be \Green{green}.
	
	Hence, $v_2$ is \Red{red}, meaning that there exists a directed path in $G_{\mathcal{O}}$ from $v_1$ to $v_2$. Reversing the direction of all edges in this directed path gives an orientation of $G$ having outdegree vector $(o_1-1,o_2+1,o_3,o_4,\ldots,o_n)$ (since $v_1$ loses one outgoing arc, $v_2$ gains an outgoing arc, and all intermediate vertices on the path trade one outgoing arc for another). This proves the lemma.
\end{proof}

\begin{corollary}\label{repeatcor}
    Let $e \in E$ join vertices $v_1$ and $v_2$.
	Let $\mathbf{o} := (o_3,o_4,\ldots,o_n) \in \mathbb{Z}^{n-2}$ be a vector such that the graph $G-e$ has an outdegree vector whose last $n-2$ entries equal $\mathbf{o}$ (meaning that they are $o_3,o_4,\ldots,o_n$ in this order).
	Then there exist integers $o_1,o_2$, and $k$ such that the set of all outdegree vectors of $G-e$ whose last $n-2$ entries equal $\mathbf{o}$ is \begin{align*}
        \{(o_1-j,o_2+j,o_3,\ldots,o_n) : 0 \leq j \leq k\}.
    \end{align*}
\end{corollary}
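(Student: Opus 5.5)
Fix $\mathbf{o}$ and let $S$ be the set of all outdegree vectors of $G-e$ whose last $n-2$ entries equal $\mathbf{o}$; by hypothesis $S \neq \emptyset$. The plan has three steps: show that $S$ lies on a single line of slope $-1$, show that it has no gaps, and then read off $o_1,o_2,k$.

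\textbf{Step 1: all elements of $S$ have the same first-two-coordinate sum.} For any orientation of $G-e$, the outdegrees sum to the number of edges of $G-e$, namely $|E|-1$. Hence every $(a,b,o_3,\ldots,o_n) \in S$ satisfies
\begin{align*}
a+b = |E|-1-(o_3+\cdots+o_n) =: N,
\end{align*}
a constant independent of the element chosen. So each element of $S$ is determined by its second coordinate $b$, and has the form $(N-b,b,o_3,\ldots,o_n)$. Let $B$ be the set of second coordinates that occur. Since $S$ is finite and nonempty (there are finitely many orientations), $B$ has a minimum $b_{\min}$ and a maximum $b_{\max}$.

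\textbf{Step 2: $B$ is an interval of integers.} Here I apply Lemma~\ref{outdeg} to the graph $G-e$; the lemma was stated for an arbitrary graph, so this is legitimate. Suppose $b \in B$ with $b < b_{\max}$. Then both $(N-b,b,o_3,\ldots,o_n)$ and $(N-b_{\max},b_{\max},o_3,\ldots,o_n)$ are outdegree vectors of $G-e$, they agree in coordinates $3,\ldots,n$, and $b<b_{\max}$. Lemma~\ref{outdeg} then shows that $(N-b-1,b+1,o_3,\ldots,o_n)$ is also an outdegree vector of $G-e$, so $b+1 \in B$. Starting from $b_{\min}$ and inducting upward, every integer in $[b_{\min},b_{\max}]$ lies in $B$. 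Conversely $B \subseteq [b_{\min},b_{\max}]$ trivially, so $B = [b_{\min},b_{\max}] \cap \ZZ$.

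\textbf{Step 3: choose the parameters.} Set $o_2 := b_{\min}$, $o_1 := N-b_{\min}$ and $k := b_{\max}-b_{\min} \ge 0$. By Steps 1 and 2, $S$ is exactly $\{(o_1-j,o_2+j,o_3,\ldots,o_n) : 0\le j\le k\}$, which is the claimed form.

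I expect no real obstacle. The only points that need care are the invariance of $a+b$ in Step 1 (each edge, including a loop, contributes exactly one to the total outdegree) and checking that the hypotheses of Lemma~\ref{outdeg} hold with $G-e$ in place of $G$. Note also that the edge $e$ itself plays no role in the argument beyond specifying the graph $G-e$.
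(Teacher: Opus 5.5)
Your proof is correct and follows essentially the same route as the paper: the first two coordinates have constant sum because the outdegrees total $|E|-1$, and then repeated applications of Lemma~\ref{outdeg} to $G-e$ fill in every vector between the two extremes. Your version is slightly more careful than the paper's, since at each step you name the comparison vector that Lemma~\ref{outdeg} requires (the one with maximal second coordinate), which the paper leaves implicit.
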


\begin{proof}
    For any orientation $\mathcal{O}$ of $G-e$, we have \begin{align}
        \sum_{i=1}^n\deg_{(G-e)_{\mathcal{O}}}^+(v_i) = |\operatorname{E}(G-e)|.
		\label{sum-deg+}
    \end{align} To be consistent with later notation, let $\mathscr{O}_{G-e}^\mathbf{o}$ denote the set of outdegree vectors of $G-e$ whose last $n-2$ entries equal $\mathbf{o}$. Since the last $n-2$ coordinates are fixed, the sum of the first two coordinates is constant amongst all vectors $(o_1, o_2, \ldots, o_n) \in \out_{G-e}^{\mathbf{o}}$, namely
		\begin{align*}
		o_1 + o_2 = |\operatorname{E}(G-e)| - \tup{o_3 + o_4 + \cdots + o_n} \qquad \textcolor{gray}{\tup{\text{by \eqref{sum-deg+}}}}.
		\end{align*}

		Let $(o_1^{\min},o_2^{\max},o_3,\ldots,o_n)$ and $(o_1^{\max},o_2^{\min},o_3,\ldots,o_n)$ be the vectors in $\out_{G-e}^{\mathbf{o}}$ with minimal and maximal first coordinate, respectively (noting that this in turn determines the second coordinates of each). Such vectors exist since $\out_{G-e}^\mathbf{o}$ is finite and nonempty (by the assumption that $G-e$ has an outdegree vector whose last $n-2$ entries equal $\mathbf{o}$).
		
		Starting from the vector $(o_1^{\max},o_2^{\min},o_3,\ldots,o_n)$ with maximal first coordinate (and therefore minimal second coordinate, due to $o_1+o_2$ being constant), we can repeatedly decrease the first coordinate by $1$ and simultaneously increase the second coordinate by $1$, until the first coordinate reaches $o_1^{\min}$.
		At each step, Lemma \ref{outdeg} guarantees the resulting vector is a valid outdegree vector of $G-e$. Doing this $k := o_1^{\max}-o_1^{\min}$ times produces the sequence \begin{align*}
			(o_1^{\max}-j,o_2^{\min}+j,o_3,\ldots,o_n), \quad j=0,1,\ldots,k,
		\end{align*} which starts at $(o_1^{\max},o_2^{\min},o_3,\ldots,o_n)$ and ends at $(o_1^{\min},o_2^{\max},o_3,\ldots,o_n)$. Every intermediate vector in this sequence is a valid outdegree vector, and by construction, these are all the vectors in $\out_{G-e}^{\mathbf{o}}$. Hence, \begin{align*}
			\out_{G-e}^{\mathbf{o}} = \{(o_1^{\max}-j,o_2^{\min}+j,o_3,\ldots,o_n) : j=0,1,\ldots,k\}. \tag*{\qedhere}
		\end{align*}
\end{proof}

\subsection{Equivalent Descriptions of $T_G\tup{2,1}$}

We now establish the following identities:

\begin{theorem}\label{equiv}
    We have
    \begin{align}
        T_G\tup{2,1}
        &= (\# \text{ of indegree vectors of orientations of $G$})
        \label{eq.equiv.12} \\
        &= (\# \text{ of outdegree vectors of orientations of $G$})
        \label{eq.equiv.13} \\
        &= (\# \text{ of score vectors of orientations of $G$}).
        \label{eq.equiv.14}
    \end{align}
    Moreover, for any given orientation $\Or$ of $G$, we have
    \begin{align}
        T_G\tup{2,1}
        &=
        (\# \text{ of score vectors of spanning subdigraphs of $G_{\Or}$}).
        \label{eq.equiv.15}
    \end{align}
\end{theorem}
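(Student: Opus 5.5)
The core of the argument is to show that the number of outdegree vectors satisfies the simplified deletion-contraction recurrence \eqref{eq.tutte21.dc}. The other three quantities then reduce to it by simple bijections.

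\textbf{The reductions.} Write $N(G)$ for the number of outdegree vectors of orientations of $G$, and note that $N$ depends only on $G$.
\begin{enumerate}
\item Reversing every arc turns outdegree vectors into indegree vectors bijectively, which gives \eqref{eq.equiv.12} from \eqref{eq.equiv.13}.
\item Since $\deg^+_{G_\Or}(v)+\deg^-_{G_\Or}(v)=\deg_G(v)$ is fixed, the score satisfies $s(v)=2\deg^+(v)-\deg_G(v)$. Hence the map from outdegree vectors to score vectors is injective, which gives \eqref{eq.equiv.14}.
\item For \eqref{eq.equiv.15}, fix $\Or$. To a spanning subdigraph $H_\Or$ associate the orientation $\mathcal{P}$ that agrees with $\Or$ on $E\setminus F$ and reverses $\Or$ on $F$. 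Then $s_{G_{\mathcal P}}=s_{G_\Or}-2s_{H_\Or}$. This is a bijection between subsets $F$ and orientations, so $F\mapsto s_{H_\Or}$ and $\mathcal P\mapsto s_{G_{\mathcal P}}$ have images in bijection via an affine injective map. Therefore \eqref{eq.equiv.15} also follows from \eqref{eq.equiv.14}.
\end{enumerate}

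\textbf{The recurrence for $N$.} We show $N(G)=T_G(2,1)$ by induction on $|E|$, checking \eqref{eq.tutte21.dc}. If $G$ has no edges, then $N(G)=1$. If $e$ is a loop, it contributes $1$ to the outdegree of its vertex in every orientation, so $N(G)=N(G-e)$.

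Now let $e$ join $v_1\neq v_2$. Group outdegree vectors of $G-e$ by their tail $\oo=(o_3,\ldots,o_n)$. By Corollary~\ref{repeatcor}, each nonempty class is a segment $\{(o_1-j,o_2+j,\oo):0\le j\le k\}$ with $k+1$ elements.

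Outdegree vectors of $G$ are obtained by adding $1$ to either the first or the second coordinate. For a fixed tail, the resulting vectors are $(o_1-j+1,o_2+j,\oo)$ and $(o_1-j,o_2+j+1,\oo)$. Together these form the segment $\{(o_1+1-j,o_2+j,\oo):0\le j\le k+1\}$, which has $k+2$ elements. So each nonempty tail class contributes exactly one more vector for $G$ than for $G-e$.

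Thus $N(G)=N(G-e)+M$, where $M$ is the number of tails $\oo$ realized by $G-e$.

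\textbf{Identifying $M$.} The remaining step, which I expect to be the main obstacle, is to show $M=N(G/e)$.

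Orientations of $G/e$ correspond to orientations of $G-e$, since edges at the merged vertex $v_{12}$ come from edges at $v_1$ or $v_2$. The outdegree at $v_{12}$ is $o_1+o_2$, and the remaining coordinates are $\oo$. The value $o_1+o_2=|E(G-e)|-\sum_{i\ge3}o_i$ is determined by $\oo$.

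Hence outdegree vectors of $G/e$ are in bijection with tails realized by $G-e$. One must check two things: loops created at $v_{12}$ by parallel copies of $e$ are handled consistently (they contribute to the outdegree of $v_{12}$ either way), and the $G/e$ vector is determined by $\oo$.

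Therefore $N(G)=N(G-e)+N(G/e)$, and induction via \eqref{eq.tutte21.dc} and Corollary~\ref{tgnotloop} finishes the proof.
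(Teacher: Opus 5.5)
Your proposal is correct and follows essentially the same route as the paper. Both prove the deletion--contraction recurrence for the number of outdegree vectors using Corollary~\ref{repeatcor}, with each realized tail $\oo$ giving exactly one outdegree vector of $G/e$, and both use the affine bijections $s=2\deg^+-\deg_G$ and $s_{G_{\mathcal{P}}}=s_{G_{\Or}}-2s_{H_{\Or}}$; the only cosmetic difference is that you match indegree and outdegree vectors by reversing all arcs rather than via $\deg^-=\deg_G-\deg^+$, which works equally well.
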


To prove Theorem \ref{equiv}, we handle some equalities individually in the lemmas below before combining these results to establish the complete set.
We begin with the equality \eqref{eq.equiv.13}, which was originally discovered by Stanley in \cite{Stanley1980} and proved by Brylawski and Oxley in \cite[Proposition~6.3.19]{BrylawskiOxley1992} (see also \cite{KleitmanWinston} for a bijective proof\footnote{We note that the ``score vectors'' of \cite{KleitmanWinston} are not our score vectors, but rather our outdegree vectors.}).

\begin{lemma}\label{lem:1=3}
    We have $T_G(2,1) = (\# \text{ of outdegree vectors of orientations of $G$})$.
\end{lemma}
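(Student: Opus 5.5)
We argue by induction on $|E|$, showing that $N(G) := (\#\text{ of outdegree vectors of orientations of } G)$ satisfies the recurrence \eqref{eq.tutte21.dc}. Since \eqref{eq.tutte21.dc} together with the base case determines $T_G(2,1)$, this gives $N(G) = T_G(2,1)$.

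\textbf{Easy cases.} If $G$ has no edges, the only orientation is the empty one. Its outdegree vector is the zero vector, so $N(G)=1$. If $e$ is a loop at $v_i$, then every orientation of $e$ adds exactly $1$ to $\deg^+(v_i)$. Hence the outdegree vectors of $G$ are exactly those of $G-e$ shifted by the $i$-th unit vector, and $N(G)=N(G-e)$.

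\textbf{Main step.} Let $e$ be a non-loop edge, and relabel so that its endpoints are $v_1,v_2$. Partition the outdegree vectors of $G-e$ according to their tail $\mathbf{o}=(o_3,\ldots,o_n)$. By Corollary~\ref{repeatcor}, for each attained $\mathbf{o}$ the fiber $\out_{G-e}^{\mathbf{o}}$ is a ``segment'' $\{(a-j,b+j,\mathbf{o}) : 0\le j\le k\}$, so it has $k+1$ elements.

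Every orientation of $G$ restricts to an orientation of $G-e$, and orienting $e$ adds $1$ to either $o_1$ or $o_2$. Therefore the $\mathbf{o}$-fiber of the outdegree vectors of $G$ is the union of two shifted segments: $\{(a-j+1,b+j,\mathbf{o})\}$ and $\{(a-j,b+j+1,\mathbf{o})\}$. Both lie on the line where the sum of the first two coordinates equals $a+b+1$. In the first coordinate they cover $[a-k+1,a+1]$ and $[a-k,a]$ respectively, so their union is a segment with exactly $k+2$ points. Summing over $\mathbf{o}$ gives
\begin{align*}
N(G) = N(G-e) + \#\{\mathbf{o} : \out_{G-e}^{\mathbf{o}} \neq \emptyset\}.
\end{align*}

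It remains to show that the number of attained tails $\mathbf{o}$ equals $N(G/e)$. The vertices of $G/e$ are $v_{12},v_3,\ldots,v_n$. Orientations of $G-e$ correspond bijectively to orientations of $G/e$; edges between $v_1$ and $v_2$ become loops at $v_{12}$, and this identification still works for them. Under this correspondence, the outdegree of $v_{12}$ is $o_1+o_2 = |E(G-e)| - \sum_{i\ge 3} o_i$, which is determined by $\mathbf{o}$. Hence the map sending an outdegree vector of $G/e$ to its tail $\mathbf{o}$ is a bijection onto the set of attained tails. Combining the two displays yields $N(G)=N(G-e)+N(G/e)$, which completes the induction.

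\textbf{Main obstacle.} The substantive content is the interval structure of the fibers, and that is already supplied by Lemma~\ref{outdeg} and Corollary~\ref{repeatcor}. The only point requiring care is the count of the union of the two shifted segments. The two shifts overlap in all but one point because each fiber is a contiguous segment rather than a scattered set. Without Corollary~\ref{repeatcor}, this count would fail.
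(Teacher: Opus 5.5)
Your proposal is correct and follows essentially the same route as the paper. Both arguments use deletion--contraction on the number of outdegree vectors, with Corollary~\ref{repeatcor} giving the segment structure of each fiber, so that $|\out_G^{\mathbf{o}}| = |\out_{G-e}^{\mathbf{o}}|+1$ for attained tails. The paper shows fiberwise that $|\out_{G/e}^{\mathbf{o}}|$ is $1$ or $0$, while you sum first and then count attained tails via the tail map; this is the same bookkeeping.

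One small correction: when $G$ has edges parallel to $e$, the map from orientations of $G-e$ to orientations of $G/e$ is not a bijection. Each such edge has two orientations in $G-e$ but becomes a loop with only one orientation in $G/e$. The map is still surjective and preserves the tail $\mathbf{o}$, which is all your argument needs.
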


\begin{proof} 
    For any graph $H$ on $m$ vertices\footnote{Recall that $n$ is the number of vertices of our fixed graph $G$.}, let $\out_H \subseteq \mathbb{Z}^{m}$ denote the set of all outdegree vectors of $H$, and let $o(H) := |\out_H|$. Thus, our goal is to show that $T_G(2,1) = o(G)$.
	
	For any $m \geq n-2$, we define the map \begin{align*}
        \zeta : \mathbb{Z}^{m} \to \mathbb{Z}^{n-2}, \quad \mathbf{x}=(x_1,\ldots,x_m) \mapsto \zeta(\mathbf{x})=(x_{m-(n-3)},x_{m-(n-4)},\ldots,x_m);
    \end{align*} that is, $\zeta$ sends each vector $\mathbf{x} \in \mathbb{Z}^{m}$ to the vector of the last $n-2$ entries of $\mathbf{x}$. For any vector $\mathbf{o} \in \mathbb{Z}^{n-2}$, let \begin{align*}
        \mathscr{O}_H^{\mathbf{o}} :=
		\{\mathbf{x} \in \out_H : \zeta(\mathbf{x}) = \mathbf{o}\}
		=
		\{\text{outdegree vectors $\mathbf{x}$ of $H$ such that $\zeta(\mathbf{x}) = \mathbf{o}$}\} \subseteq \mathbb{Z}^{m},
    \end{align*} and set \begin{align*}
        \mathscr{T}_H := \zeta(\mathscr{O}_H) \subseteq \mathbb{Z}^{n-2}.
    \end{align*} Clearly, $\out_H^\textbf{o} \neq \emptyset$ if and only if $\textbf{o} \in \tu_H$. We can then partition \begin{align*}
        \out_H = \bigsqcup_{\textbf{o} \in \mathbb{Z}^{n-2}}\out_H^\textbf{o},
    \end{align*} so that \begin{align*}
        o(H) = |\out_H| = \sum_{\textbf{o} \in \mathbb{Z}^{n-2}}|\out_H^\textbf{o}|,
    \end{align*} noting that this sum is finite from our above remark.

    For our fixed graph $G$, we will show that $o(G)$ satisfies the same deletion-contraction recurrence \eqref{eq.tutte21.dc} as $T_G(2,1)$. Fix an edge $e \in E$.
    
    \bigskip
    \noindent\underline{Case One}: $e$ is not a loop. We will show that $o(G)=o(G-e)+o(G/e)$. Without loss of generality (we can permute the vertices at will), suppose $e$ joins vertices $v_1$ and $v_2$. 
    
    Fix a vector $\textbf{o} = (o_3,\ldots,o_n) \in \mathbb{Z}^{n-2}$.
	Suppose at first that $\oo \in \mathscr{T}_{G-e}$;
	that is, $G-e$ has an outdegree vector whose last $n-2$ entries are $\oo$.
	By Corollary~\ref{repeatcor}, there are integers $o_1,o_2$, and $k$ such that 
	\begin{align*}
        \out_{G-e}^{\textbf{o}} = \{(o_1-j,o_2+j,o_3,\ldots,o_n) : 0 \leq j \leq k\}.
    \end{align*}
	Hence, $\abs{\out_{G-e}^\textbf{o}} = k+1$.
	
	Now, given an orientation $\mathcal{O}$ of $G-e$, we can orient the edge $e$ in two different ways to obtain two orientations of $G$:
	\begin{enumerate}
        \item If $e$ is oriented as $(v_1,v_2)$, then the outdegree vector of the resulting orientation $\mathcal{O}_1$ satisfies \begin{align*}
            \deg_{G_{\mathcal{O}_1}}^+(v_1) = \deg_{(G-e)_{\mathcal{O}}}^+(v_1)+1 \quad \text{and} \quad \deg_{G_{\mathcal{O}_1}}^+(v_k) = \deg_{(G-e)_{\mathcal{O}}}^+(v_k) \ \forall k \neq 1.
        \end{align*} 
        \item If $e$ is oriented as $(v_2,v_1)$, then the outdegree vector of the resulting orientation $\mathcal{O}_2$ satisfies \begin{align*}
            \deg_{G_{\mathcal{O}_2}}^+(v_2) = \deg_{(G-e)_{\mathcal{O}}}^+(v_2)+1 \quad \text{and} \quad \deg_{G_{\mathcal{O}_2}}^+(v_k) = \deg_{(G-e)_{\mathcal{O}}}^+(v_k) \ \forall k \neq 2.
        \end{align*} 
    \end{enumerate}
	So, we see that each outdegree vector $(o_1-j,o_2+j,o_3,\ldots,o_n) \in \out_{G-e}^\textbf{o}$ yields two outdegree vectors in $\out_{G}^\textbf{o}$: \begin{align*}
        (o_1-j+1,o_2+j,o_3,\ldots,o_n) \quad \text{and} \quad (o_1-j,o_2+j+1,o_3,\ldots,o_n).
    \end{align*}
	This accounts for all outdegree vectors in $\out_{G}^\textbf{o}$ (since any orientation of $G$ can be transformed into an orientation of $G-e$ by removing the edge $e=\{v_1,v_2\}$, which decreases either the outdegree of $v_1$ or the outdegree of $v_2$ by $1$).
	Hence,
	\begin{align*}
        \out_{G}^{\textbf{o}}
		&= \{(o_1-j+1,o_2+j,o_3,\ldots,o_n) : 0 \leq j \leq k\}
		\cup \{(o_1-j,o_2+j+1,o_3,\ldots,o_n) : 0 \leq j \leq k\} \\
		&= \{(o_1-j+1,o_2+j,o_3,\ldots,o_n) : 0 \leq j \leq k+1\},
    \end{align*}
	since every $j \in \{0,1,\ldots,k\}$ satisfies $
        (o_1-(j+1)+1,o_2+(j+1),o_3,\ldots,o_n) = (o_1-j,o_2+j+1,o_3,\ldots,o_n)
    $.
	Therefore,
	$\abs{\out_G^\textbf{o}} = k+2 = \abs{\out_{G-e}^\textbf{o}}+1$ (since $\abs{\out_{G-e}^\textbf{o}} = k+1$).

	On the other hand, the graph $G/e$ has an orientation with outdegree vector $(o_1+o_2,o_3,\ldots,o_n)$, which can be obtained by taking any orientation of $G-e$ with outdegree vector $(o_1,o_2,o_3,\ldots,o_n)$ (this exists because $(o_1,o_2,o_3,\ldots,o_n) \in \out_{G-e}^\textbf{o}$) and contracting the edge $e$ (indeed, the contraction of $e$ identifies $v_1$ and $v_2$ into a single vertex whose outdegree is $o_1+o_2$, and all other vertices remain the same).
	Thus, $(o_1+o_2,o_3,\ldots,o_n) \in \out_{G/e}^\textbf{o}$.
	Moreover, $(o_1+o_2,o_3,\ldots,o_n)$ is the \textit{only} outdegree vector in $\out_{G/e}^\textbf{o}$ (since the sum of all outdegrees must be $\abs{E(G/e)} = \abs{E\setminus e} = o_1+o_2+\cdots+o_n$, but the last $n-2$ outdegrees must be the entries of $\mathbf{o}$).
	Hence, $|\out_{G/e}^\textbf{o}| = 1$.
	And so, $\abs{\out_G^\textbf{o}} = \abs{\out_{G-e}^\textbf{o}}+1 =|\out_{G-e}^\textbf{o}|+|\out_{G/e}^\textbf{o}|$.

    \bigskip
    On the other hand, suppose $\oo \notin \mathscr{T}_{G-e}$. Then $\out_{G-e}^\oo = \emptyset$. We will show that $\out_{G/e}^\oo = \emptyset$ and $\out_{G}^\oo = \emptyset$, for then $|\out_{G}^\textbf{o}| = 0 = |\out_{G-e}^\textbf{o}|+|\out_{G/e}^\textbf{o}|$.

    Suppose towards contradiction that $\textbf{o} \in \mathscr{T}_{G/e}$. Then there exists an orientation $\mathcal{O}'$ of $G/e$ such that $\zeta(\deg_{(G/e)_{\mathcal{O}'}}^+) = \oo$. Let $v_{12}$ denote the vertex obtained by merging $v_1$ and $v_2$ in $G/e$. By ``unmerging'' $v_{12}$ into $v_1$ and $v_2$, we obtain an orientation $\mathcal{O}$ of $G-e$ with the same outdegrees for vertices $v_3,\ldots,v_n$, so that $\zeta(\deg_{(G-e)_{\mathcal{O}}}^{+}) = \oo$. Hence, $\oo \in \mathscr{T}_{G-e}$, a contradiction. Thus, $\textbf{o} \notin \mathscr{T}_{G/e}$, so that $\mathscr{O}_{G/e}^\textbf{o} = \emptyset$.

    Similarly, if $\textbf{o} \in \mathscr{T}_G$, then removing the edge $e$ gives an orientation of $G-e$ with the same outdegrees for $v_3,\ldots,v_n$, again contradicting $\textbf{o} \notin \mathscr{T}_{G-e}$. Hence, $\textbf{o} \notin \mathscr{T}_{G}$, so that $\mathscr{O}_{G}^\textbf{o} = \emptyset$.
	
	\bigskip
	
	Thus, regardless of whether $\oo \in \mathscr{T}_{G-e}$ or $\oo \notin \mathscr{T}_{G-e}$, we have shown that
	\begin{align}
	|\out_{G}^\oo| = |\out_{G-e}^\oo|+|\out_{G/e}^\oo|.
	\label{eq.outoo}
	\end{align}
    
    \bigskip
    Altogether, since $\oo \in \mathbb{Z}^{n-2}$ was arbitrary, we have \begin{align*}
        o(G) &= |\out_G| \\
        &= \Big|\bigsqcup_{\textbf{o} \in \mathbb{Z}^{n-2}}\out_G^\textbf{o}\Big| \\
        &= \sum_{\textbf{o} \in \mathbb{Z}^{n-2}}|\out_G^\textbf{o}| \\
        &= \sum_{\textbf{o} \in \mathbb{Z}^{n-2}}\tup{|\out_{G-e}^\textbf{o}|+|\out_{G/e}^\textbf{o}|}  \qquad \tup{\text{by \eqref{eq.outoo}}} \\
        &= \sum_{\textbf{o} \in \mathbb{Z}^{n-2}}|\out_{G-e}^\textbf{o}|+\sum_{\textbf{o} \in \mathbb{Z}^{n-2}}|\out_{G/e}^\textbf{o}| \\
        &= \Big|\bigsqcup_{\textbf{o} \in \mathbb{Z}^{n-2}}\out_{G-e}^\textbf{o}\Big| + \Big|\bigsqcup_{\textbf{o} \in \mathbb{Z}^{n-2}}\out_{G/e}^\textbf{o}\Big| \\
        &= |\out_{G-e}|+|\out_{G/e}| \\
        &= o(G-e)+o(G/e).
    \end{align*}

    \bigskip
    \noindent\underline{Case Two}: $e$ is a loop. Without loss of generality, assume $e$ joins vertex $v_1$ to itself. Let $\mathcal{O}$ be any orientation of $G-e$ with outdegree vector $\deg_{(G-e)_{\mathcal{O}}}^+$. We can orient the edge $e$ in exactly one way, namely $(v_1,v_1)$, to obtain an orientation of $G$, call it $\mathcal{O}'$. The outdegree vector of $\mathcal{O}'$ satisfies the following: \begin{align*}
        \deg_{G_{\mathcal{O}'}}^+(v_1) = \deg_{(G-e)_{\mathcal{O}}}^+(v_1)+1, \quad
        \deg_{G_{\mathcal{O}'}}^+(v_k) = \deg_{(G-e)_{\mathcal{O}}}^+(v_k) \ \ \ \forall k \neq 1.
    \end{align*} So, each outdegree vector of $G-e$ corresponds to exactly one outdegree vector of $G$ in this manner. 
	This is a one-to-one correspondence. It follows that $o(G)=o(G-e)$. 

    \bigskip
    Finally, if $G$ has no edges, then it has exactly one outdegree vector, namely $(0,\ldots,0)$. 
    
    Altogether, we see that $o(G)$ follows the same recurrence (\ref{eq.tutte21.dc}) as $T_G(2,1)$: \begin{align*}
        o(G) = \begin{cases}
            o(G-e)+o(G/e), & \text{if $e$ is not a loop}; \\
            o(G-e), & \text{if $e$ is a loop}; \\
            1, & \text{if $G$ has no edges.}
        \end{cases} 
    \end{align*}
	Therefore, by induction on $|E|$, we obtain $o(G) = T_G(2,1)$, and the proof is complete.
\end{proof}

\begin{lemma}\label{lem:2=3}
    We have
    \begin{align*}
    & (\# \text{ of indegree vectors of orientations of $G$}) \\
    &= (\# \text{ of outdegree vectors of orientations of $G$}).
    \end{align*}
\end{lemma}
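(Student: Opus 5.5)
The plan is to exhibit a bijection between the set of indegree vectors and the set of outdegree vectors of orientations of $G$. The natural candidate is the map $\mathbf{x} \mapsto \deg_G - \mathbf{x}$, where $\deg_G = (\deg_G(v_1),\ldots,\deg_G(v_n))$ is the degree sequence of $G$ itself (loops counted twice). The key identity is that for every orientation $\Or$ of $G$ and every vertex $v_i$,
\begin{align*}
\deg^+_{G_{\Or}}(v_i) + \deg^-_{G_{\Or}}(v_i) = \deg_G(v_i).
\end{align*}
This holds because each non-loop edge incident to $v_i$ is directed either into or out of $v_i$, contributing $1$ to exactly one of the two sides. A loop at $v_i$ is oriented as $(v_i,v_i)$ and so contributes $1$ to both the indegree and the outdegree, which together give the $2$ it contributes to $\deg_G(v_i)$. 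Verifying this carefully in the loop case is the only point that needs any attention; I expect no real obstacle here.

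From this identity, the indegree vector of $\Or$ equals $\deg_G - \deg^+_{G_{\Or}}$. Hence the map $\varphi:\mathbf{x}\mapsto \deg_G-\mathbf{x}$ sends the set of outdegree vectors of $G$ into the set of indegree vectors of $G$, since the outdegree vector of $\Or$ is sent to the indegree vector of the same $\Or$. Symmetrically, $\varphi$ sends each indegree vector to an outdegree vector. Since $\varphi$ is an involution on $\ZZ^n$, it restricts to a bijection between the two finite sets, so the two sets have equal cardinality.

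An alternative I would mention in passing is reversing all arcs. Reversing every arc of $\Or$ gives an orientation whose outdegree vector is the indegree vector of $\Or$, and vice versa, so the set of indegree vectors literally \emph{equals} the set of outdegree vectors. This also proves the lemma immediately, and is arguably even cleaner. I would present whichever fits the later score-vector argument better; the complement map $\varphi$ is probably more useful there, since it also shows that the score vector is $2\deg^+_{G_{\Or}} - \deg_G$.
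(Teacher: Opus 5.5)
Your proof is correct and is the paper's argument: the paper uses the same identity $\deg^+_{G_{\Or}}(v_i)+\deg^-_{G_{\Or}}(v_i)=\deg_G(v_i)$ (loops counted twice) to get the bijection $\mathbf{x}\mapsto\deg_G-\mathbf{x}$ between the two sets. Your arc-reversal alternative is also valid, and it gives slightly more, namely that the two sets are equal; the paper does not mention it.
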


\begin{proof}
    For any orientation $\mathcal{O}$ of $G$ and any vertex $v_i \in V$, we have \begin{align*}
        \deg_{G_\mathcal{O}}^+(v_i) + \deg_{G_{\mathcal{O}}}^-(v_i) = \deg_G(v_i)
    \end{align*} (since each edge containing $v_i$ either has tail $v_i$ or has head $v_i$, and since loops are counted twice in the degree).
	Equivalently, \begin{align*}
        \deg_{G_\mathcal{O}}^+(v_i) = \deg_G(v_i) - \deg_{G_{\mathcal{O}}}^-(v_i).
    \end{align*} Hence, \begin{align}
        \deg_{G_{\mathcal{O}}}^+ = \deg_{G}-\deg_{G_{\mathcal{O}}}^-
		\label{eq.deg+=deg-deg-},
    \end{align} coordinate-wise. So, we have a bijective correspondence between indegree vectors and outdegree vectors: each indegree vector determines a unique outdegree vector, and each outdegree vector arises from exactly one indegree vector. Hence, \begin{align*}
        (\# \text{ of indegree vectors of orientations of $G$}) = (\# \text{ of outdegree vectors of orientations of $G$}).
    \end{align*} This proves the lemma. 
\end{proof}

\begin{lemma}\label{lem:3=4}
    We have
    \begin{align*}
    & (\#\text{ of outdegree vectors of orientations of $G$}) \\
    &= (\#\text{  of score vectors of orientations of $G$}).
    \end{align*}
\end{lemma}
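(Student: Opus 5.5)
The plan is to copy the argument of Lemma~\ref{lem:2=3}: exhibit a bijection between outdegree vectors and score vectors that comes from a fixed coordinatewise affine map depending only on $G$. For any orientation $\Or$ of $G$ and any vertex $v_i$, we have $\deg_{G_\Or}^-(v_i) = \deg_G(v_i) - \deg_{G_\Or}^+(v_i)$. This holds because each edge at $v_i$ has $v_i$ as tail or head, and a loop at $v_i$ contributes once to each side and twice to $\deg_G(v_i)$. Substituting into the definition of the score gives
\begin{align*}
s_{G_\Or}(v_i) = \deg_{G_\Or}^+(v_i) - \deg_{G_\Or}^-(v_i) = 2\deg_{G_\Or}^+(v_i) - \deg_G(v_i),
\end{align*}
that is, $s_{G_\Or} = 2\deg_{G_\Or}^+ - \deg_G$ coordinatewise.

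Now consider the map $\phi : \ZZ^n \to \ZZ^n$, $\mathbf{x} \mapsto 2\mathbf{x} - \deg_G$. It does not depend on the orientation, and it is injective, since $\phi(\mathbf{x}) = \phi(\mathbf{y})$ forces $2\mathbf{x} = 2\mathbf{y}$ and hence $\mathbf{x} = \mathbf{y}$. By the displayed identity, $\phi$ sends the outdegree vector of each orientation to the score vector of the same orientation. Therefore $\phi$ maps the set of outdegree vectors of orientations of $G$ onto the set of score vectors of orientations of $G$: every score vector arises from some orientation, and that orientation's outdegree vector is a preimage. Being injective, $\phi$ restricts to a bijection between these two finite sets, so their cardinalities agree. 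Equivalently, the outdegree vector is recovered from the score vector as $\tup{s_{G_\Or} + \deg_G}/2$.

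There is no genuine obstacle. The only point needing care is the loop convention: a loop adds $1$ to both the outdegree and the indegree of its vertex, so its net score contribution is $0$. This matches $2\cdot 1 - 2 = 0$ coming from its double count in $\deg_G$, so the identity $\deg^+ + \deg^- = \deg_G$ used in the proof of Lemma~\ref{lem:2=3} applies unchanged.
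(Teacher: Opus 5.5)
Your proof is correct and follows essentially the same route as the paper. The paper also combines $\deg^+_{G_\Or} + \deg^-_{G_\Or} = \deg_G$ with the definition of the score to get $s_{G_\Or} = 2\deg^+_{G_\Or} - \deg_G$, then reads this off as a bijection with inverse $\tfrac{1}{2}(s_{G_\Or} + \deg_G)$; your explicit check that the fixed map $\mathbf{x} \mapsto 2\mathbf{x} - \deg_G$ is injective and onto the set of score vectors, and your remark on loops, just make that step slightly more explicit.
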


\begin{proof}
    For any orientation $\mathcal{O}$ of $G$,  the definition of the score vector of $G_\mathcal{O}$ yields \begin{align}
        s_{G_\mathcal{O}} = \deg_{G_\mathcal{O}}^+-\deg_{G_\mathcal{O}}^-,
		\label{eq.s=deg+-deg-}
    \end{align} coordinate-wise. However, \eqref{eq.deg+=deg-deg-} yields $\deg_{G_\mathcal{O}}^- = \deg_G - \deg_{G_\mathcal{O}}^+$. Substituting this into \eqref{eq.s=deg+-deg-}, we get \begin{align*}
        s_{G_\mathcal{O}} = \deg_{G_\mathcal{O}}^+-\tup{\deg_G - \deg_{G_\mathcal{O}}^+}
		= 2\deg_{G_\mathcal{O}}^+-\deg_G.
    \end{align*} This is a bijective correspondence between outdegree vectors and score vectors: given $\deg_{G_\mathcal{O}}^+$, we can uniquely compute the corresponding score vector $s_{G_\mathcal{O}}$, and given $s_{G_\mathcal{O}}$, we can recover the corresponding outdegree vector as $\deg_{G_\mathcal{O}}^+ = \frac{1}{2}(s_{G_\mathcal{O}}+\deg_G)$. Hence, \begin{align*}
        (\# \text{ of outdegree vectors of orientations of $G$}) = (\# \text{ of score vectors of orientations of $G$}).\tag*{\qedhere}
    \end{align*}
\end{proof}

\begin{lemma}\label{lem:4=5}
     Fix an orientation $\mathcal{O}$ of $G$. Then,
     \[
     (\# \text{ of score vectors of orientations of $G$}) = (\# \text{ of score vectors of spanning subdigraphs of $G_{\Or}$}).
     \]
\end{lemma}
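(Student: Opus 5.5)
The plan is to write down an explicit bijection between the two sets of score vectors: reversing the arcs of $G_{\Or}$ that lie in a chosen subset. First fix the reference orientation $\Or$ and compare an arbitrary orientation $\mathcal{P}$ of $G$ with it. Let $F_{\mathcal{P}} \subseteq E$ be the set of edges on which $\mathcal{P}$ and $\Or$ agree. Non-loop edges outside $F_{\mathcal{P}}$ are reversed relative to $\Or$. Loops contribute $0$ to every score regardless of orientation, so they can be ignored throughout, since their orientation is forced anyway.

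For a vertex $v_i$, an arc of $G_{\Or}$ contributes $+1$ to $s(v_i)$ if $v_i$ is its tail and $-1$ if $v_i$ is its head. Reversing it negates this contribution. Hence
\[
s_{G_{\mathcal{P}}} = s_{H_{\Or}} - s_{(G-F_{\mathcal{P}})_{\Or}},
\]
where $H = (V, F_{\mathcal{P}})$. Since scores are additive over edge sets, we have $s_{H_{\Or}} + s_{(G-F_{\mathcal{P}})_{\Or}} = s_{G_{\Or}}$. This gives the key identity
\[
s_{G_{\mathcal{P}}} = 2\, s_{H_{\Or}} - s_{G_{\Or}}.
\]

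Conversely, every spanning subgraph $H = (V, F)$ arises as $F_{\mathcal{P}}$ for some orientation $\mathcal{P}$: orient the edges of $F$ as in $\Or$ and reverse the rest. So $\mathcal{P} \mapsto F_{\mathcal{P}}$ is a surjection from orientations of $G$ onto spanning subgraphs, and it is a bijection if we fix the loops.

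The affine map $\mathbf{x} \mapsto 2\mathbf{x} - s_{G_{\Or}}$ on $\mathbb{Z}^n$ is injective, and by the identity above it sends the set of score vectors of spanning subdigraphs of $G_{\Or}$ onto the set of score vectors of orientations of $G$. Therefore the two sets have equal cardinality. This parallels the argument of Lemma~\ref{lem:3=4}.

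The only step needing care is the additivity of the score over disjoint edge sets, together with the sign flip under reversal, including the loop case. I expect this bookkeeping to be the main point to verify, though it is routine. There is no real obstacle beyond writing the identity cleanly.
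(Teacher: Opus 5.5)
Your proof is correct and is essentially the paper's argument: the paper indexes orientations by the set $F$ of arcs reversed relative to $\mathcal{O}$ and gets $s_{G_{\mathcal{O}^F}} = s_{G_{\mathcal{O}}} - 2\, s_{G_{\mathcal{O}}[F]}$. You instead use the complementary set of arcs that agree with $\mathcal{O}$ and get the equivalent identity $s_{G_{\mathcal{P}}} = 2\, s_{H_{\mathcal{O}}} - s_{G_{\mathcal{O}}}$, and you discard loops just as the paper does. One small correction: with loops present, $\mathcal{P} \mapsto F_{\mathcal{P}}$ is not surjective onto spanning subgraphs, since every $F_{\mathcal{P}}$ contains all loops; this does not affect the equality of the two sets of score vectors, because loops contribute $0$ to every score.
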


\begin{proof}
   Without loss of generality, we assume that $G$ has no loops, since all loops of $G$ can be removed without affecting either side of the lemma (since loops don't contribute to score vectors).
   For each subset $F \subseteq E$, let $\mathcal{O}^F$ denote the orientation of $G$ that differs from $\Or$ exactly in (the directions of) the arcs in $F$; that is, each arc $e \in F$ is assigned the direction in $\mathcal{O}^F$ opposite to its direction in $\mathcal{O}$, while all arcs in $E \setminus F$ retain the same direction in $\mathcal{O}^F$ as they did in $\mathcal{O}$.  Also, let $G_{\mathcal{O}}[F]$ denote the spanning subdigraph of $G_{\mathcal{O}}$ obtained by removing all arcs not in $F$.

    Recall that $s_D$ denotes the score vector of any digraph $D$. For any $F \subseteq E$, we can recover $s_{G_{\mathcal{O}}[F]}$ from $s_{G_{\mathcal{O}^F}}$ and vice-versa as follows: Fix a vertex $v_i$. Then \begin{align*}
        & s_{G_{\mathcal{O}^F}}(v_i) \\
		&= (\# \text{ arcs directed out of $v_i$ in $G_{\mathcal{O}^F}$}) - (\# \text{ arcs directed into $v_i$ in $G_{\mathcal{O}^F}$})
		\\
		&= \Big[(\# \text{ of arcs not in $F$ directed out of $v_i$ in $G_{\mathcal{O}}$}) + (\# \text{ of arcs in $F$ directed into $v_i$ in $G_{\mathcal{O}}$}) \Big] \\
        & \ \ \ \ - \Big[(\# \text{ of arcs not in $F$ directed into $v_i$ in $G_{\mathcal{O}}$}) + (\# \text{ of arcs in $F$ directed out of $v_i$ in $G_{\mathcal{O}}$})\Big] \\
		& \textcolor{gray}{\qquad \qquad \Big(\text{since the arcs not in $F$ are oriented the same way in $\Or^F$ as they are in $\Or$,}}\\
		& \qquad \qquad \quad \text{\textcolor{gray}{while the arcs in $F$ are oriented in opposite ways in $\Or^F$ and in $\Or$\Big)}} \\
        &= \Big[ (\# \text{ of arcs not in $F$ directed out of $v_i$ in $G_{\mathcal{O}}$}) - (\# \text{ of arcs not in $F$ directed into $v_i$ in $G_{\mathcal{O}}$)}\Big] \\
        & \ \ \ \ -\Big[ (\# \text{ of arcs in $F$ directed out of $v_i$ in $G_{\mathcal{O}}$}) - (\# \text{ of arcs in $F$ directed into $v_i$ in $G_{\mathcal{O}}$)}\Big] \\
        &= s_{G_{\mathcal{O}}[E \setminus F]}(v_i) - s_{G_{\mathcal{O}}[F]}(v_i).
    \end{align*}
    At the same time, \begin{align*}
        s_{G_{\mathcal{O}}[E\setminus F]}(v_i) + s_{G_{\mathcal{O}}[F]}(v_i) = s_{G_{\mathcal{O}}}(v_i),
    \end{align*} since each edge of $G$ lies in exactly one of $E \setminus F$ and $F$. 
	Eliminating $s_{G_{\mathcal{O}}[E\setminus F]}(v_i)$ from these two equations, we obtain
	\begin{align*}
        s_{G_{\mathcal{O}^F}}(v_i) &= s_{G_{\mathcal{O}}}(v_i) - 2s_{G_{\mathcal{O}}[F]}(v_i).
    \end{align*} Since $v_i \in V$ was arbitrary, we see that \begin{align*}
        s_{G_{\mathcal{O}^F}} = s_{G_{\mathcal{O}}}-2s_{G_{\mathcal{O}}[F]},
    \end{align*} component-wise, so that the vectors $s_{G_{\mathcal{O}^F}}$ and $s_{G_{\mathcal{O}}[F]}$ determine one another. Since every orientation $\Or'$ of $G$ is of the form $\mathcal{O}^F$ for some $F \subseteq E$ (namely, $F$ is the set of all edges of $G$ that are oriented differently in $\Or$ and $\Or'$),
	whereas every spanning subdigraph of $G_{\Or}$ has the form $G_{\Or}[F]$ for some $F \subseteq E$,
	this one-to-one correspondence reveals that \begin{align*}
        (\# \text{ of score vectors of orientations of $G$}) &= (\# \text{ of score vectors of spanning subdigraphs of $G_{\mathcal{O}}$}).
    \end{align*}
    This proves the lemma.
\end{proof} 

\begin{proof}[Proof of Theorem \ref{equiv}]
    This follows from Lemmas \ref{lem:1=3}, \ref{lem:2=3}, \ref{lem:3=4}, \ref{lem:4=5}.
\end{proof}

\subsection{Proof of Theorem \ref{maintheorem}(a)}
We can now prove Theorem \ref{maintheorem}(a):

\begin{proof}[Proof of Theorem \ref{maintheorem}(a)]
    Suppose $G$ is bipartite. Then, we can decompose $V = L \sqcup R$ such that each edge of $G$ has one endpoint in $L$ and the other in $R$. Orient all edges in $G$ from $L$ to $R$, i.e., each edge $e \in E$ joining vertices $v_i \in L$ and $v_j \in R$ is oriented $(v_i,v_j)$. Call this orientation $\mathcal{O}$.

    Let $H_{\mathcal{O}} \subseteq G_{\mathcal{O}}$ be any spanning subdigraph, and let $H$ be the corresponding underlying undirected subgraph of $G$ obtained by forgetting all orientations of arcs in $H_{\mathcal{O}}$. Note that $H$ and $H_{\Or}$ carry the same information, and indeed there is a bijection between the set of all spanning subdigraphs of $G_\Or$ and the set of all spanning subgraphs of $G$ that sends $H_\Or$ to $H$.
	
	Fix a vertex $v_i \in V$. 
    
    If $v_i \in L$, then all edges incident to $v_i$ are oriented outward; that is, $\deg_{H_\mathcal{O}}^-(v_i) = 0$ and $\deg_{H_{\mathcal{O}}}^+(v_i) = \deg_{H}(v_i)$. Hence, $s_{H_{\mathcal{O}}}(v_i) = \deg_{H_{\mathcal{O}}}^+(v_i)-\deg_{H_{\mathcal{O}}}^-(v_i)=\deg_{H}(v_i)$.
    
    On the other hand, if $v_i \in R$, then all edges incident to $v_i$ are oriented inward; that is, $\deg_{H_{\mathcal{O}}}^-(v_i) = \deg_{H}(v_i)$ and $\deg_{H_{\mathcal{O}}}^+(v_i) = 0$. Hence, $s_{H_{\mathcal{O}}}(v_i) = \deg_{H_{\mathcal{O}}}^+(v_i)-\deg_{H_{\mathcal{O}}}^-(v_i) = -\deg_{H}(v_i)$.
    Altogether, \begin{align*}
        s_{H_{\mathcal{O}}}(v_i) = \begin{cases}
            \deg_H(v_i), & \text{if } v_i \in L; \\
            -\deg_{H}(v_i), & \text{if } v_i \in R.
         \end{cases}
    \end{align*} Thus, we see that the score vector $s_{H_{\mathcal{O}}}$ is simply the degree sequence $\deg_{H}$ except with minus signs on all vertices $v_i\in R$. More precisely, there is a map \begin{align*}
        \Phi: \ & \set{\text{degree sequences of spanning subgraphs of $G$}} \\
		\to & \set{\text{score vectors of spanning subdigraphs of $G_{\Or}$}},
	\end{align*}
	defined by
	\begin{align*}
		(\Phi(\deg_H))(v_i) = \begin{cases}
            \deg_H(v_i), & \text{if } v_i \in L; \\
            -\deg_H(v_i), & \text{if } v_i \in R
        \end{cases} 
		\qquad \text{for all spanning subgraphs } H \subseteq G \text{ and all } v_i \in V.
    \end{align*} This map $\Phi$ is a bijection: It sends the degree sequence $\deg_H$ of any spanning subgraph $H \subseteq G$ to the score vector $s_{H_\Or}$ of the corresponding spanning subdigraph $H_{\mathcal{O}} \subseteq G_{\mathcal{O}}$, and every score vector arises uniquely from a degree sequence in this way. Hence, 
	\begin{align*}
        & (\# \text{ of degree sequences $\deg_H$ of spanning subgraphs $H \subseteq G$}) \\
        &= (\# \text{ of score vectors of spanning subdigraphs of $G_{\mathcal{O}}$}) \\
        &= T_{G}(2,1) \qquad \textcolor{gray}{\tup{\text{by \eqref{eq.equiv.15}}}} \\
		&= (\# \text{ of forests in $G$}) \qquad \textcolor{gray}{\tup{\text{by Proposition \ref{num_of_trees}}}}.
    \end{align*}
	This proves Theorem~\ref{maintheorem}(a).
\end{proof}

\section{\label{sec.nbp}The Non-Bipartite Case}

We now turn to the proof of Theorem~\ref{maintheorem}(b), which says that any non-bipartite graph $G$ satisfies \begin{align}\label{conj}
    (\# \text{ of forests in $G$}) <(\# \text{ of degree sequences $\deg_H$ of spanning subgraphs $H \subseteq G$}).
\end{align}

Shteiner and Shteyner proved inequality (\ref{conj}) for odd cycles \cite[Proposition~5.10]{shteiner2025comparingnumberssubforestssubgraphdegreetuples}, cactus graphs \cite[Theorem~5.22]{shteiner2025comparingnumberssubforestssubgraphdegreetuples}, and generalized book graphs \cite[Theorem~5.28]{shteiner2025comparingnumberssubforestssubgraphdegreetuples}. Furthermore, if $G$ is an \emph{FHM-graph} (meaning that no induced subgraph of $G$ consists of two vertex-disjoint odd-length cycles and no further edges), then \eqref{conj} follows from \cite[Theorem 5.3 and Theorem 5.1]{Stanley1991}.

We will now prove inequality (\ref{conj}) for an arbitrary non-bipartite graph. To do so, we interpret degree sequences as subset sums of signless incidence vectors and compare these with linearly independent families of such vectors.
The following results were found by the AI system \url{bolzano.app} \cite{Bolzano26}, but the writeup is our own.

\subsection{\label{subsec.vecs}Subset Sums and Linearly Independent Index Sets}

Let $\VV$ be an $\mathbb{R}$-vector space, and let $X = \tup{\mathbf{x}_1,\ldots,\mathbf{x}_m}$ be an (ordered) list of vectors $\mathbf{x}_i \in \VV$.
Set $[m] := \{1,2,\ldots,m\}$. We take \begin{align*}
    \Sigma(X) := \set{\sum_{i \in A}\mathbf{x}_i : A \subseteq [m]}
\end{align*} to denote the \textbf{set of distinct subset sums} of $X$, and we take \begin{align*}
    \mathcal{I}(X) := \set{A \subseteq [m] : \tup{\mathbf{x}_i}_{i \in A} \text{ is linearly independent in } \VV} 
\end{align*} to denote the \textbf{family of linearly independent index sets} of $X$.

\begin{lemma}\label{vector-list}
    For every finite list $X$ of real vectors, \begin{align}\label{subset_sums_index_sets}
        \abs{\Sigma(X)} \geq \abs{\mathcal{I}(X)}.
    \end{align}
\end{lemma}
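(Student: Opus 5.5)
We argue by induction on the length $m$ of the list $X$, in the spirit of the deletion--contraction recurrence \eqref{eq.tutte21.dc}. If $m=0$, then $\Sigma(X)=\set{\mathbf{0}}$ and $\mathcal{I}(X)=\set{\emptyset}$, so both sides of \eqref{subset_sums_index_sets} equal $1$. For $m\ge 1$, single out the last vector $\mathbf{x}:=\mathbf{x}_m$ and let $X'=\tup{\mathbf{x}_1,\ldots,\mathbf{x}_{m-1}}$. If $\mathbf{x}=\mathbf{0}$, then $\Sigma(X)=\Sigma(X')$ and $\mathcal{I}(X)=\mathcal{I}(X')$, so we are done by induction. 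Assume therefore $\mathbf{x}\neq\mathbf{0}$.

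Let $\pi:\VV\to\VV/\mathbb{R}\mathbf{x}$ be the quotient map, and let $X''=\tup{\pi(\mathbf{x}_1),\ldots,\pi(\mathbf{x}_{m-1})}$ (the analogue of contraction). On the independence side we will show
\[
\abs{\mathcal{I}(X)}=\abs{\mathcal{I}(X')}+\abs{\mathcal{I}(X'')}.
\]
Indeed, sets not containing $m$ are exactly $\mathcal{I}(X')$. A set $A\cup\set{m}$ with $A\subseteq[m-1]$ is independent iff $\tup{\pi(\mathbf{x}_i)}_{i\in A}$ is independent in the quotient. This is standard linear algebra and needs $\mathbf{x}\ne \mathbf{0}$.

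On the subset-sum side we must show
\[
\abs{\Sigma(X)}\ge\abs{\Sigma(X')}+\abs{\Sigma(X'')},
\]
after which induction (both $X'$ and $X''$ have length $m-1$) finishes the proof. Note $\Sigma(X)=\Sigma(X')\cup(\Sigma(X')+\mathbf{x})$. Partition $\Sigma(X')$ into fibers of $\pi$. Each fiber is a finite set of points on a line $\mathbf{s}+\mathbb{R}\mathbf{x}$, and the fibers correspond bijectively to the elements of $\pi(\Sigma(X'))=\Sigma(X'')$. Within one fiber, identify the points with a finite set $T\subset\mathbb{R}$ via $\mathbf{s}+t\mathbf{x}\mapsto t$. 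The contribution of that fiber to $\Sigma(X)$ is $T\cup(T+1)$. Since $\max T+1\notin T$, we have $\abs{T\cup(T+1)}\ge\abs{T}+1$. Summing over the fibers gives $\abs{\Sigma(X)}\ge\abs{\Sigma(X')}+\abs{\Sigma(X'')}$. Here we use that the sets $T\cup(T+1)$ for different fibers lie on distinct parallel lines, hence are disjoint.

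The main obstacle is exactly this fiberwise counting: realizing that the contracted list $X''$ lives in a quotient space (where the same lemma applies by induction, as it is stated for arbitrary real vector spaces), and that every fiber gains at least one new point. The "maximum plus one" trick handles the latter. The rest is bookkeeping parallel to the proof of Lemma~\ref{lem:1=3}.
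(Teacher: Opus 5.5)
Your proposal is correct and follows essentially the same route as the paper. Both arguments use induction on $m$, the quotient $\pi:\VV\to\VV/\langle\mathbf{x}\rangle$, the identity $\abs{\mathcal{I}(X)}=\abs{\mathcal{I}(X')}+\abs{\mathcal{I}(\pi(X'))}$, and the fact that each $\pi$-fiber of $\Sigma(X')$ contributes at least one new point to $\Sigma(X)$. The only difference is cosmetic: you find that point by parametrizing each fiber along the line $\mathbf{s}+\mathbb{R}\mathbf{x}$ and taking $\max T+1$, while the paper maximizes a linear functional $\ell$ with $\ell(\mathbf{x})>0$ over each fiber; the two devices are interchangeable, and yours avoids choosing $\ell$.
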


\begin{proof}
    Let us induct on $m=\abs{X}$.
    \medskip
    
    \textit{Base case:} The case $m=0$ is immediate: In this case, $\Sigma(X) = \set{\mathbf{0}}$ and $\mathcal{I}\tup{X} = \set{\emptyset}$ (since the empty list of vectors is linearly independent), so that both sides of \eqref{subset_sums_index_sets} equal $1$. This completes the base case.


    \medskip
    \textit{Induction step:} Next, suppose \eqref{subset_sums_index_sets} holds for all lists of $m-1$ vectors in a real vector space. Let $X = \tup{\mathbf{x}_1,\ldots,\mathbf{x}_{m-1},\mathbf{x}_m}$ be a list of $m$ vectors in $\VV$. Set \begin{align*}
        X' := \tup{\mathbf{x}_1,\ldots,\mathbf{x}_{m-1}}, \quad \mathbf{x} := \mathbf{x}_m,
    \end{align*} so that $X = \tup{X',\mathbf{x}}$ and $|X'|=m-1$.

    First, suppose $\mathbf{x} = \mathbf{0}$. Then \eqref{subset_sums_index_sets} follows immediately: adjoining the zero vector does not create any new subset sums, and no linearly independent subset of $X$ contains $\mathbf{0}$. Hence, \begin{align*}
        \Sigma(X) = \Sigma(X') \quad \text{and} \quad \mathcal{I}(X) = \mathcal{I}(X'),
    \end{align*} so the result follows from the induction hypothesis applied to $X'$.

    And so, we may assume that $\mathbf{x} \neq \mathbf{0}$. Let $S := \Sigma(X')$. By the induction hypothesis, \begin{align}\label{induct_X}
        \abs{S} = \abs{\Sigma(X')} \geq \abs{\mathcal{I}(X')}. 
\end{align} Moreover, \[\Sigma(X) = S \cup (S+\mathbf{x}),\] where $S+\mathbf{x} = \set{\mathbf{s}+\mathbf{x} : \mathbf{s} \in S}$. Indeed, every subset sum either excludes $\mathbf{x}$, yielding an element of $S$, or includes $\mathbf{x}$, yielding an element of $S+\mathbf{x}$. Hence, \begin{align}\label{split_sigmaX}
    \abs{\Sigma(X)} = \abs{S} + \abs{(S+\mathbf{x}) \setminus S}.
\end{align}

Next, consider the $1$-dimensional vector subspace \begin{align*}
    \langle \mathbf{x} \rangle = \set{c\mathbf{x} : c \in \mathbb{R}}
\end{align*} of $\VV$, and let $\pi : \VV \to \VV / \langle \mathbf{x} \rangle$ denote the quotient map, given by \begin{align*}
    \pi(\mathbf{v}) = \mathbf{v}+ \langle \mathbf{x} \rangle \quad \text{for all } \mathbf{v} \in \VV.
\end{align*} In particular, since $\mathbf{x} \in \langle \mathbf{x} \rangle$, we have $\pi(\mathbf{x}) = \mathbf{0}$. 

Let $\ell : \VV \to \mathbb{R}$ be a linear functional such that $\ell(\mathbf{x}) > 0$ (since $\mathbf{x} \neq \mathbf{0}$, such a linear functional exists). For each $y \in \pi(S)$, let \begin{align*}
    F_y := S \cap \pi^{-1}(y),
\end{align*} which is finite and nonempty. Choose $\mathbf{s}_y \in F_y$ such that \begin{align*}
    \ell(\mathbf{s}_y) = \max_{\mathbf{s} \in F_y}\ell(\mathbf{s}).
\end{align*} Since $\pi(\mathbf{x}) = 0$, we have \begin{align*}
    \pi(\mathbf{s}_y + \mathbf{x}) = \pi(\mathbf{s}_y) = y
    \qquad \textcolor{gray}{\left(\text{since } \mathbf{s}_y \in F_y = S \cap \pi^{-1}(y) \subseteq \pi^{-1}(y)\right)}.
\end{align*} Moreover, since $\ell$ is linear and $\ell(\mathbf{x}) > 0$, it follows that \begin{align*}
    \ell(\mathbf{s}_y + \mathbf{x}) = \ell(\mathbf{s}_y) + \ell(\mathbf{x}) > \ell(\mathbf{s}_y).
\end{align*} Hence, by the maximality of $\mathbf{s}_y$ in $F_y$, we have $\mathbf{s}_y + \mathbf{x} \notin S$. That is, \begin{align*}
    \mathbf{s}_y + \mathbf{x} \in (S+\mathbf{x}) \setminus S.
\end{align*} Moreover, if $y_1,y_2 \in \pi(S)$ are such that $y_1 \neq y_2$, then $\pi(\mathbf{s}_{y_1}) = y_1 \neq y_2 = \pi(\mathbf{s}_{y_2})$ and therefore $\mathbf{s}_{y_1} \neq \mathbf{s}_{y_2}$, so that $\mathbf{s}_{y_1} + \mathbf{x} \neq \mathbf{s}_{y_2} + \mathbf{x}$ as well.
And so, \begin{align*}
    y \mapsto \mathbf{s}_y + \mathbf{x}
\end{align*} defines an injection from $\pi(S)$ into $(S+\mathbf{x}) \setminus S$; consequently, \begin{align}\label{injection}
    \abs{(S+\mathbf{x}) \setminus S} \geq \abs{\pi(S)}.
\end{align} 

Next, by linearity of $\pi$, \begin{align*}
    \pi(S) &= \pi\tup{\Sigma(X')} = \set{\pi \tup{\sum_{i \in A}\mathbf{x}_i } : A \subseteq [m-1]} \\ &= \set{\sum_{i \in A}\pi(\mathbf{x}_i) : A \subseteq [m-1]} = \Sigma\tup{\pi(X')}.
\end{align*} We thus obtain \begin{align}\label{induct_X'}
    |\pi(S)| = \abs{\Sigma(\pi(X'))} \geq \abs{\mathcal{I}(\pi(X'))}
\end{align}
by applying the induction hypothesis to $\pi(X')$.
From \eqref{injection} and \eqref{induct_X'}, we obtain \begin{align}\label{ineq_S+x-S}
    \abs{(S+\mathbf{x}) \setminus S} \geq \abs{\pi(S)} \geq \abs{\mathcal{I}(\pi(X'))}.
\end{align} And so, \eqref{split_sigmaX} yields \begin{align}
    \abs{\Sigma(X)} &= \underbrace{\abs{S}}_{\textcolor{gray}{\substack{\geq \abs{\mathcal{I}(X')} \\ \text{(by (\ref{induct_X}))}}}}+ \ \underbrace{\abs{\tup{S + \mathbf{x}} \setminus S}}_{\textcolor{gray}{\substack{\geq \abs{\mathcal{I}(\pi(X'))} \\ \text{(by (\ref{ineq_S+x-S}))}}}} \qquad  \nonumber \\
    &\geq \abs{\mathcal{I}(X')} + \abs{\mathcal{I}(\pi(X'))}.\label{splitI} 
\end{align} 

Finally, let us identify $\mathcal{I}(X)$ in terms of $\mathcal{I}(X')$ and $\mathcal{I}(\pi(X'))$. Every subset $A \subseteq [m]$ either contains $m$ or it does not, and these two cases are disjoint. 

If $m \notin A$, then $A \subseteq [m-1]$; hence, $A \in \mathcal{I}(X)$ if and only if $A \in \mathcal{I}(X')$. 
Thus, the sets $A \in \mathcal{I}\tup{X}$ that don't contain $m$ are exactly the sets $A \in \mathcal{I}\tup{X'}$.

If $m \in A$, let us write $A = B \cup \{m\}$, where $B \subseteq [m-1]$. Then $\tup{\mathbf{x}_i : i \in A}$ is linearly independent if and only if $\tup{\pi(\mathbf{x}_i) : i \in B}$ is linearly independent\footnote{Indeed, if $\sum_{i \in B}c_i\pi(\mathbf{x}_i) = \mathbf{0}$, then linearity of $\pi$ yields $\pi\tup{\sum_{i \in B}c_i\mathbf{x}_i} = \mathbf{0}$. Hence, $\sum_{i \in B}c_i\mathbf{x}_i \in \ker(\pi) = \langle \mathbf{x} \rangle = \langle \mathbf{x}_m \rangle$. Thus, there exists $c_m \in \mathbb{R}$ such that $\sum_{i \in B}c_i\mathbf{x}_i +c_m\mathbf{x}_m = \mathbf{0}$. Conversely, any linear dependence relation among $\tup{\mathbf{x}_i : i \in B \cup \{m\} = A}$ projects under $\pi$ to a linear dependence relation among $\tup{\pi(\mathbf{x}_i) : i \in B}$, since $\mathbf{x}_m = \mathbf{x} \neq \mathbf{0}$ ensures that the $\mathbf{x}_m$-coefficient cannot be the only nonzero coefficient.}, meaning that $B \in \mathcal{I}\tup{\pi(X')}$.
In other words, $A \in \mathcal{I}\tup{X}$ if and only if $B \in \mathcal{I}\tup{\pi(X')}$.
Thus, the sets $A \in \mathcal{I}\tup{X}$ that contain $m$ are in bijection with the sets $B \in \mathcal{I}\tup{\pi(X')}$.

Combining the results of both cases, we find
\begin{align*}
    \mathcal{I}(X) = \underbrace{\mathcal{I}(X')}_{\textcolor{gray}{\text{sets $A$ not containing $m$}}} \sqcup\ \ \ \ \underbrace{\set{B \cup \{m\} : B \in \mathcal{I}(\pi(X'))}}_{\textcolor{gray}{\text{sets $A$ containing $m$}}},
\end{align*} so that
\begin{align*}
    \abs{\mathcal{I}(X)} = \abs{\mathcal{I}(X')} + \abs{\mathcal{I}(\pi(X'))}.
\end{align*} And thus, from (\ref{splitI}), we obtain \begin{align*}
    \abs{\Sigma(X)} \geq \abs{\mathcal{I}(X')} + \abs{\mathcal{I}(\pi(X'))} = \abs{\mathcal{I}(X)}.
\end{align*} This proves (\ref{subset_sums_index_sets}).
\end{proof}

\subsection{Signless Incidence Vectors}
Let us turn our attention back to graphs. Let $G = (V,E)$ be an arbitrary graph with vertices $V=\set{v_1,\ldots,v_n}$. For an edge $e \in E$ with endpoints $v_i$ and $v_j$ (not necessarily distinct), we define its \textbf{signless incidence vector} as the vector \begin{align*}
    \mathbf{b}_e := \mathbf{e}_{v_i}+\mathbf{e}_{v_j}
    \in \mathbb{R}^V
\end{align*} where $\mathbf{e}_{v_i}$ denotes the standard basis vector of $\mathbb{R}^V$ corresponding to vertex $v_i$.

Recall from Definition~\ref{firstdef}(1) that the degree sequence of a spanning subgraph $H=(V,F)$ of $G$ is the ordered $n$-tuple \begin{align*}
    \deg_H = \tup{\deg_H(v_1),\ldots,\deg_H(v_n)}.
\end{align*} When convenient, we identify this tuple with the vector \begin{align*}
    \sum_{i=1}^n\deg_H(v_i)\mathbf{e}_{v_i} \in \mathbb{R}^V.
\end{align*}

\begin{lemma}\label{forest_incidence}
    For a spanning subgraph $H=(V,F)$ of $G$, we have \begin{align}\label{degsec_vec}
    \deg_H = \sum_{e \in F}\mathbf{b}_e.
\end{align}
\end{lemma}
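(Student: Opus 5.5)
The identity \eqref{degsec_vec} is an equality of vectors in $\mathbb{R}^V$, so I will compare the two sides coordinate by coordinate. Fix a vertex $v_k \in V$. I need to show that the $v_k$-coordinate of $\sum_{e \in F}\mathbf{b}_e$ equals $\deg_H(v_k)$.

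By linearity of taking coordinates, this coordinate equals $\sum_{e \in F} (\mathbf{b}_e)_{v_k}$. So the whole proof reduces to computing the single number $(\mathbf{b}_e)_{v_k}$ for each edge $e \in F$ with endpoints $v_i, v_j$. Since $\mathbf{b}_e = \mathbf{e}_{v_i}+\mathbf{e}_{v_j}$, we get $(\mathbf{b}_e)_{v_k} = [v_i = v_k] + [v_j = v_k]$, using Iverson brackets. I will split into three cases:
\begin{itemize}
\item If $e$ is not incident to $v_k$, this value is $0$.
\item If $e$ is a non-loop edge with exactly one endpoint equal to $v_k$, it is $1$.
\item If $e$ is a loop at $v_k$ (so $v_i = v_j = v_k$), it is $2$.
\end{itemize}
In every case the value is precisely the contribution of $e$ to $\deg_H(v_k)$ under the convention of Definition~\ref{firstdef}, where loops count twice.

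Summing over $e \in F$ then yields exactly $\deg_H(v_k)$, which is the number of edges of $F$ incident to $v_k$ with loops counted twice. Since $v_k$ was arbitrary, the two vectors agree in every coordinate, and \eqref{degsec_vec} follows once we identify $\deg_H$ with $\sum_k \deg_H(v_k)\mathbf{e}_{v_k}$ as stated just before the lemma.

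There is no real obstacle here; the only point needing care is the loop case. There, the definition $\mathbf{b}_e = \mathbf{e}_{v_i}+\mathbf{e}_{v_j}$ with $v_i=v_j$ gives $2\mathbf{e}_{v_i}$, which matches the double counting of loops in the degree. Parallel edges cause no issue, because the sum runs over the edges $e \in F$ themselves, not over pairs of endpoints.
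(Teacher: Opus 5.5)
Your proposal is correct and follows essentially the same route as the paper. You compare the two sides coordinate by coordinate at each vertex and split into three cases: an edge not incident to the vertex contributes $0$, a non-loop incident edge contributes $1$, and a loop contributes $2$. These contributions match the convention that loops count twice in the degree.
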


\begin{proof}
    By definition of degree sequence (viewed as a vector in $\mathbb{R}^V$), the $v_i$-coordinate of $\deg_H$ is $\deg_H(v_i)$, which counts the number of edges in $F$ incident to $v_i$ (with each loop at $v_i$ contributing twice).

    On the other hand, consider the right-hand side of \eqref{degsec_vec}. Fix a vertex $v_i \in V$, and examine its $v_i$-coordinate. Each edge of $H$ contributes to this coordinate as follows. If $e = \set{v_i,v_j} \in F$ with $j \neq i$, then $\mathbf{b}_e = \mathbf{e}_{v_i}+\mathbf{e}_{v_j}$, so this edge contributes $1$ to the $v_i$-coordinate. If $e = \set{v_i,v_i} \in F$ (that is, $e$ is a loop with endpoint $v_i$), then $\mathbf{b}_e = 2\mathbf{e}_{v_i}$, so this edge contributes $2$ to the $v_i$-coordinate. Finally, if $e \in F$ is not incident to $v_i$, then it contributes $0$ to the $v_i$-coordinate.

    And so, the $v_i$-coordinate of $\sum_{e \in F}\mathbf{b}_e$ is exactly the number of edges of $H$ incident to $v_i$, counting loops twice; this is precisely $\deg_H(v_i)$. Since this holds for every $i$, the two vectors agree in every coordinate; thus, the lemma is proven.
\end{proof}

Let $X_G := \tup{\mathbf{b}_e : e \in E}$, viewed as an ordered (labeled) list. Then, by Lemma \ref{forest_incidence}, we have \begin{align}\label{sum_equals_degs}
    \Sigma(X_G) = \set{\sum_{e \in F}\mathbf{b}_e : F \subseteq E}
    = \set{\deg_H : H=(V,F) \text{ is a spanning subgraph of $G$}}
\end{align} (using the notations of Subsection \ref{subsec.vecs});
that is, the degree sequences of spanning subgraphs of $G$ are precisely the elements of $\Sigma(X_G)$.

Next, the following two lemmas identify some edge sets of $G$ whose corresponding signless incidence vectors are linearly independent:

\begin{lemma}\label{forests_independence}
    If $F \subseteq E$ is the edge set of a forest in $G$, then the corresponding signless incidence vectors $\tup{\mathbf{b}_e : e \in F}$ are linearly independent in $\mathbb{R}^V$.
\end{lemma}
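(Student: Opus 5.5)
We argue by induction on $\abs{F}$, peeling off a leaf of the forest each time. If $F = \emptyset$, the empty family is linearly independent and there is nothing to prove. Now let $F \neq \emptyset$ be the edge set of a forest. A forest contains no loops, since a loop would be a cycle of length $1$. Hence every $e \in F$ joins two distinct vertices, and $\mathbf{b}_e = \mathbf{e}_{v_i} + \mathbf{e}_{v_j}$ with $i \neq j$.

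The key step is the standard fact that a nonempty forest has a leaf. Pick a connected component of $(V,F)$ containing at least one edge. It is a tree with at least two vertices, so it has a vertex $v$ of degree exactly $1$ in $(V,F)$; for instance, take an endpoint of a longest path in that component. Let $f \in F$ be the unique edge of $F$ incident to $v$.

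Now suppose $\sum_{e \in F} c_e \mathbf{b}_e = \mathbf{0}$ for some real coefficients $c_e$, and look at the $v$-coordinate of this sum. Every edge $e \in F \setminus \{f\}$ is not incident to $v$, so $\mathbf{b}_e$ has $v$-coordinate $0$. The edge $f$ is not a loop, so $\mathbf{b}_f$ has $v$-coordinate $1$. Therefore the $v$-coordinate of the sum is $c_f$, which forces $c_f = 0$.

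The relation then reduces to $\sum_{e \in F \setminus \{f\}} c_e \mathbf{b}_e = \mathbf{0}$. The set $F \setminus \{f\}$ is again the edge set of a forest, since deleting an edge cannot create a cycle. By the induction hypothesis, all the remaining $c_e$ vanish, which proves the linear independence.

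The only point needing any care is the existence of a degree-one vertex together with the exclusion of loops, which gives the $1$ (rather than $2$) in the $v$-coordinate. Both are routine, so I expect no real obstacle.
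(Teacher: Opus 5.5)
Your proof is correct and follows essentially the same route as the paper. Both argue by induction on $|F|$: the coordinate at a leaf $v$ forces the coefficient of its unique incident edge to vanish, and the induction hypothesis then applies to the smaller forest. Your explicit remark that a forest has no loops, so that the $v$-coordinate of $\mathbf{b}_f$ is $1$ rather than $2$, makes precise a point the paper leaves implicit.
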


\begin{proof}
    Suppose \begin{align}\label{linin_forest}
        \sum_{e \in F}a_e\mathbf{b}_e = \mathbf{0}
    \end{align} for some scalars $a_e \in \mathbb{R}$. We will show that $a_e = 0$ for all $e \in F$.

    We proceed by induction on $|F|$. The base case $|F| = 0$ is immediate. Next, suppose $|F| = k \geq 1$, and assume that every forest with fewer than $k$ edges has linearly independent signless incidence vectors. Since $F$ is a nonempty forest, it contains a leaf vertex $v$ incident to a unique edge $e_0 \in F$. Considering the $v$-coordinate of \eqref{linin_forest}, every edge $e \in F \setminus \{e_0\}$ contributes zero (since no such edge is incident to $v$, and thus the $v$-coordinate of $\mathbf{b}_e$ is zero for such an edge). Since the $v$-coordinate of $\mathbf{b}_{e_0}$ is $1$, it follows that
    the $v$-coordinate of the left-hand side of \eqref{linin_forest} equals $a_{e_0}$. Therefore, $a_{e_0} = 0$.

    Finally, let $F' = F \setminus \set{e_0}$. Since every subgraph of a forest is a forest, $F'$ is itself the edge set of a forest with $|F'| < k$. Since $a_{e_0} = 0$, equation \eqref{linin_forest} reduces to \begin{align*}
        \sum_{e \in F'}a_e\mathbf{b}_e = \mathbf{0}.
    \end{align*} By the induction hypothesis, it follows that $a_e = 0$ for all $e \in F'$. Together with $a_{e_0} = 0$, we conclude that $a_e = 0$ for all $e \in F$. Hence, $\tup{\mathbf{b}_e : e \in F}$ is linearly independent in $\mathbb{R}^V$.
\end{proof}

\begin{lemma}\label{odd_cycles_independence}
    If $C \subseteq E$ is the edge set of an odd cycle in $G$, then the corresponding signless incidence vectors $\tup{\mathbf{b}_e : e \in C}$ are linearly independent in $\mathbb{R}^V$.
\end{lemma}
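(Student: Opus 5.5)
Let the odd cycle $C$ have vertices $u_1,u_2,\ldots,u_k$ in cyclic order, with $k$ odd, and edges $c_i=\{u_i,u_{i+1}\}$ for $i\in\{1,\ldots,k\}$, indices taken modulo $k$. The case $k=1$ is a single loop, with $\mathbf{b}_{c_1}=2\mathbf{e}_{u_1}\neq\mathbf{0}$, so it is immediately linearly independent. For $k\ge 3$ the vertices $u_i$ are distinct. If parallel edges are allowed, a cycle of length $2$ is even, so it does not arise here.

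Suppose $\sum_{i=1}^k a_i\mathbf{b}_{c_i}=\mathbf{0}$. Look at the $u_{i+1}$-coordinate. The only edges of $C$ incident to $u_{i+1}$ are $c_i$ and $c_{i+1}$, and each contributes $1$. So $a_i+a_{i+1}=0$ for every $i$, which gives $a_{i+1}=-a_i$. Iterating around the cycle yields $a_1=a_{k+1}=(-1)^k a_1=-a_1$, since $k$ is odd. Hence $a_1=0$, and then every $a_i=0$. This is the key step: the parity of $k$ is exactly what forces the alternating sign pattern to be inconsistent.

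Equivalently, the $k\times k$ matrix with columns $\mathbf{b}_{c_i}$, restricted to the rows $u_1,\ldots,u_k$, is a circulant $I+P$ with $P$ the cyclic shift. Its determinant is $1+(-1)^{k+1}=2\neq 0$ when $k$ is odd. The coordinate argument above is the cleaner one and avoids computing this determinant.

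The only points needing care are bookkeeping rather than real obstacles. First, the cycle's vertices are distinct, so each coordinate equation involves exactly two coefficients. Second, all coordinates outside $\{u_1,\ldots,u_k\}$ give the trivial equation $0=0$.
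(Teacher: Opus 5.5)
Your proof is correct and is essentially the paper's argument: the coordinate at each cycle vertex gives $a_i + a_{i+1} = 0$, and propagating the resulting alternating sign pattern around a cycle of odd length forces $a_1 = -a_1$, so all coefficients vanish. Your explicit treatment of the loop case ($k=1$, where $\mathbf{b}_{c_1} = 2\mathbf{e}_{u_1}$) is a small piece of extra care that the paper's write-up covers only implicitly.
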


\begin{proof}
    Suppose $|C|=2k+1$ for some $k \geq 0$. Label the vertices of the cycle $w_0,w_1,\ldots,w_{2k}$. Let $e_i \in C$ join vertices $w_{i}$ and $w_{i+1}$ for $i=0,1,\ldots,2k-1$,  and let $e_{2k} \in C$ join vertices $w_{2k}$ and $w_0$.

   Suppose \begin{align}\label{linin_cycle}
       \sum_{i=0}^{2k}c_i\mathbf{b}_{e_{i}} = \mathbf{0}
   \end{align} for some scalars $c_i \in \mathbb{R}$. We will show that $c_i = 0$ for all $i \in \{0,1,\ldots,2k\}$.

   Fix a vertex $w_i$ in the cycle. The only edges of $C$ incident to $w_i$ are $e_{i-1}$ and $e_i$ (with $e_{-1} := e_{2k}$), and each contributes $1$ to the $w_i$-coordinate of \eqref{linin_cycle}. So, we obtain $c_{i-1}+c_i=0$ for all $i$; that is, \begin{align}\label{c_negation}
       c_{i} = -c_{i-1} \quad \text{for all } i
   \end{align} (where $c_{-1} := c_{2k}$).
   Iterating this relation gives \begin{align}
       \label{c_reduction}
       c_i = (-1)^i c_0 \quad \text{for all } i.
   \end{align} In particular, \begin{align*}
       c_{2k} = (-1)^{2k}c_0 = c_0.
   \end{align*}
   On the other hand, relation \eqref{c_negation} (for $i = 0$) gives $c_0 = - c_{-1} = - c_{2k} = - c_0$, so that $c_0 = 0$. Hence, \eqref{c_reduction} shows that $c_i = 0$ for all $i$. Thus, $\tup{\mathbf{b}_e : e \in C}$ is linearly independent in $\mathbb{R}^V$.
\end{proof}

\subsection{Proof of Theorem \ref{maintheorem}(b)}

We can now prove Theorem \ref{maintheorem}(b):

\begin{proof}[Proof of Theorem \ref{maintheorem}(b)] Suppose $G=(V,E)$ is non-bipartite. Then $G$ contains an odd cycle with edge set $C \subseteq E$. By Lemma \ref{odd_cycles_independence}, the corresponding family of signless incidence vectors $\tup{\mathbf{b}_e : e \in C}$ is linearly independent. Hence, $C \in \mathcal{I}(X_G)$.

On the other hand, Lemma~\ref{forests_independence} implies that every edge set of a forest lies in $\mathcal{I}(X_G)$. Hence, \begin{align*}
    \mathcal{I}(X_G) \supseteq \set{F \subseteq E : F \text{ is the edge set of a forest}}.
\end{align*} Since $C$ is the edge set of a cycle, it is not the edge set of a forest. So, we may conclude that \begin{align*}
    \mathcal{I}(X_G) \supsetneq \set{F \subseteq E : F \text{ is the edge set of a forest}};
\end{align*} consequently, \begin{align}\label{ind_forests}
    \abs{\mathcal{I}(X_G)}
    > \abs{\set{F \subseteq E : F \text{ is the edge set of a forest}}}
    = \tup{\# \text{ of forests in $G$}}
\end{align}
(since forests in $G$ are uniquely determined by their edge sets).

And so, we have \begin{align*}
    &(\# \text{ of degree sequences $\deg_H$ of spanning subgraphs $H \subseteq G$}) \\ &= \abs{\Sigma(X_G)} \quad \textcolor{gray}{\text{(by \eqref{sum_equals_degs})}} \\
    & \geq \abs{\mathcal{I}(X_G)} \quad \textcolor{gray}{\text{(by Lemma~\ref{vector-list})}} \\
    & > (\# \text{ of forests in $G$}) \quad \textcolor{gray}{(\text{by \eqref{ind_forests})}}.  
\end{align*} This proves Theorem~\ref{maintheorem}(b).
    
\end{proof}

\section{\label{sec.opf}Extension to Odd Pseudoforests}

\subsection{Strengthening of Theorem~\ref{maintheorem}}

We now strengthen Theorem \ref{maintheorem} by replacing forests with odd pseudoforests, as defined below:

\begin{definition}
    Let $G=(V,E)$ be an undirected graph, and let $H=(V,F)$ be a spanning subgraph of $G$, where $F \subseteq E$. We allow parallel edges and loops.
    \begin{enumerate}
        \item We say that $H$ is a \textbf{pseudoforest} in $G$ if every connected component of $H$ contains at most one cycle\footnote{Cycles that differ only by a cyclic rotation are considered to be identical.}.
        \item We say that $H$ is an \textbf{odd pseudoforest} in $G$ if $H$ is a pseudoforest in which every cycle is odd.
    \end{enumerate}
\end{definition}

The following result was found by \url{bolzano.app} \cite{Bolzano26} with a brief intuitive explanation as to why it holds. We provide a rigorous proof below: 
\begin{theorem}\label{pseudoforests_theorem}
    Let $G=(V,E)$ be an undirected graph. Then, \begin{align*}
        \tup{\# \text{ of odd pseudoforests in $G$}} \leq \tup{\# \text{ of degree sequences $\deg_H$ of spanning subgraphs $H \subseteq G$}}.
    \end{align*}
\end{theorem}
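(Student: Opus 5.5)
The plan is to reuse the machinery of Section~\ref{sec.nbp}. By \eqref{sum_equals_degs} and Lemma~\ref{vector-list}, the number of degree sequences equals $\abs{\Sigma(X_G)}$, and this is at least $\abs{\mathcal{I}(X_G)}$. It therefore suffices to show that the edge set of every odd pseudoforest lies in $\mathcal{I}(X_G)$; that is, its signless incidence vectors $\tup{\mathbf{b}_e : e \in F}$ are linearly independent in $\mathbb{R}^V$. Since spanning subgraphs are determined by their edge sets, the map sending an odd pseudoforest to its edge set is injective into $\mathcal{I}(X_G)$, and the inequality follows.

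To prove independence, I first reduce to connected components. The vectors $\mathbf{b}_e$ for edges in distinct components of $H=(V,F)$ are supported on disjoint coordinate sets, so a linear relation among all of them splits into separate relations within each component. It thus suffices to treat a connected odd pseudoforest $K$. If $K$ has no cycle, it is a tree and Lemma~\ref{forests_independence} applies. Otherwise $K$ has exactly one cycle $C$, which is odd. Note that a loop counts as an odd cycle of length $1$ with $\mathbf{b}_e = 2\mathbf{e}_v \neq \mathbf{0}$, and this case is covered by Lemma~\ref{odd_cycles_independence} with $k=0$. A pair of parallel edges forms an even $2$-cycle, so it is excluded by hypothesis.

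For the unicyclic case I mimic the leaf-stripping induction of Lemma~\ref{forests_independence}. Suppose $\sum_{e \in F} a_e \mathbf{b}_e = \mathbf{0}$ and $K$ is not just the cycle $C$. Then $K$ has a vertex $v$ of degree $1$, because removing the cycle's edges leaves a forest whose trees hang off the cycle, and any such nontrivial tree has a leaf off the cycle. Reading off the $v$-coordinate gives $a_{e_0}=0$ for the unique edge $e_0$ at $v$. Deleting $e_0$ (and the now isolated vertex $v$) leaves a connected graph that is still an odd pseudoforest with one fewer edge. Repeating this, we arrive at $K$ equal to the odd cycle $C$ (possibly plus isolated vertices), where Lemma~\ref{odd_cycles_independence} finishes the argument.

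The main obstacle is the structural claim that a connected graph with exactly one cycle, but strictly more edges than that cycle, has a vertex of degree $1$. I would prove it by counting. A connected unicyclic graph has $\abs{E(K)} = \abs{V(K)}$, via Lemma~\ref{forest_iff} applied after deleting one cycle edge, which leaves a tree. Hence the sum of degrees is $2\abs{V(K)}$. If no vertex had degree $1$ (and $K$ has no isolated vertices, having more than one vertex when it is not just a loop), every degree would be exactly $2$, forcing $K$ to be the cycle itself. One must also take care that loops and parallel edges are handled correctly in this counting, and that the footnote's convention on cycles keeps "exactly one cycle" meaningful in the presence of multi-edges.
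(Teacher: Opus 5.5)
Your proposal is correct and follows essentially the same route as the paper: reduce via \eqref{sum_equals_degs} and Lemma~\ref{vector-list} to showing that the signless incidence vectors of each odd pseudoforest are linearly independent, split into connected components, handle tree components by Lemma~\ref{forests_independence}, and strip leaves from each unicyclic component down to its odd cycle, which Lemma~\ref{odd_cycles_independence} handles. The only minor difference is how you find the leaf (the paper's Lemma~\ref{one_cycle_leaf}): you use a direct handshake count from $|E(K)|=|V(K)|$, whereas the paper deletes a cycle edge and invokes Lemma~\ref{two_leaves_path}; both arguments work.
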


For bipartite graphs, odd pseudoforests coincide with forests, so Theorem~\ref{pseudoforests_theorem} gives one direction of Theorem~\ref{maintheorem}(a).
In contrast, for non-bipartite graphs, the presence of odd cycles yields the strict inequality in Theorem~\ref{maintheorem}(b).

In order to prove Theorem~\ref{pseudoforests_theorem}, we will need the following four lemmas:

\begin{lemma}\label{two_leaves_path}
   Let $G=(V,E)$ be a tree. If $G$ has exactly two leaves (i.e., vertices of degree $1$), then $G$ is a path.
\end{lemma}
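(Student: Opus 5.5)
We prove Lemma~\ref{two_leaves_path} with a degree-counting argument, using the edge count of a tree. Let $G=(V,E)$ be a tree with exactly two leaves, and write $n=|V|$. By Lemma~\ref{forest_iff}, applied with $k(E)=1$ since $G$ is connected, we have $|E|=n-1$. The handshake identity then gives
\begin{align*}
\sum_{v\in V}\deg_G(v) = 2|E| = 2n-2.
\end{align*}
A tree with at least one edge has no isolated vertices and no loops. Since $G$ has two leaves, it has at least one edge, so every vertex has degree at least $1$.

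Next we show that all non-leaves have degree exactly $2$. The two leaves contribute $2$ to the degree sum. Each of the remaining $n-2$ vertices has degree at least $2$, since it is not a leaf and has nonzero degree. Therefore
\begin{align*}
2n-2 \ge 2 + 2(n-2) = 2n-2,
\end{align*}
so equality holds throughout, and every non-leaf vertex has degree exactly $2$.

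Finally we identify $G$ as a path. Let $u$ and $w$ be the two leaves. Since $G$ is connected, there is a path $P$ from $u$ to $w$ in $G$. Every internal vertex of $P$ already uses its two incident edges inside $P$, and the endpoints $u$ and $w$ use their single edge inside $P$. Hence no edge of $G$ leaves $V(P)$. By connectivity, $V(P)=V$, and then $E(P)=E$ because $|E(P)|=|V|-1=|E|$. So $G=P$ is a path.

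The only step requiring care is this last one: the degree condition alone allows a disjoint union of a path and cycles. Connectivity of the tree is exactly what rules that out, which is why we argue through the $u$--$w$ path and the claim that no edge leaves it.
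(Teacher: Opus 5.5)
Your proof is correct and follows essentially the same route as the paper: the handshake count with $|E|=|V|-1$ forces every non-leaf to have degree exactly $2$, and then the $u$--$w$ path is shown to contain every vertex by connectivity and every edge by a final counting or acyclicity step. The differences are cosmetic. The paper proves $V(P)=V$ by contradiction (an outside neighbor of $P$ would give some vertex degree $\ge 3$) and gets $E(P)=E$ from acyclicity, whereas you note that no edge leaves $V(P)$ and then use the edge count.
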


\begin{proof}
    Let $u,w$ denote the two leaves of $G$ (that is, $\deg_G(u)=\deg_G(w)=1$). Since $G$ is a tree and thus connected, it has no vertices of degree $0$. Hence, all non-leaves of $G$ must have degree at least $2$.
    That is, $\deg_G(v) \geq 2$ for all $v \in V \setminus \set{u,w}$.

    By the Handshaking Lemma, \begin{align*}
        2|E| = \sum_{v \in V}\deg_G(v) = \underbrace{\deg_G(u)}_{\textcolor{gray}{=1}}+\underbrace{\deg_G(w)}_{\textcolor{gray}{=1}}+\sum_{v \in V \setminus \set{u,w}}\deg_G(v) = 2+\sum_{v \in V \setminus \set{u,w}}\deg_G(v).
    \end{align*} Since every addend in the last sum is at least $2$ (that is, $\deg_G(v) \geq 2$ for all $v \in V \setminus \{u,w\}$), we get \begin{align}\label{twoedges}
        2|E|=2+\sum_{v \in V \setminus \set{u,w}}\deg_G(v) \geq 2+2\tup{|V|-2} = 2\tup{|V|-1}.
    \end{align} However, because $G$ is a tree, $|E|=|V|-1$. Hence, equality must hold everywhere in \eqref{twoedges}. In particular, this forces $\deg_G(v)=2$ for all $v \in V \setminus \set{u,w}$ (since $\deg_G(v) \geq 2$ for such vertices). 

    And so, every vertex in $G$ has degree $1$ or $2$, with exactly two vertices having degree $1$. Let $P$ be the unique path from $u$ to $w$ (which exists since $G$ is a tree). Suppose $P$ does not contain every vertex of $G$. Since $G$ is connected, there exists a vertex $x \notin P$ adjacent to some vertex $v \in P$. Since $\deg_G(u)=\deg_G(w)=1$, we have that $v \notin \set{u,w}$ (for otherwise $u$ or $w$ would have a neighbor outside $P$ in addition to its neighbor in $P$, contradicting the fact that it is a leaf). Hence, $v$ is an internal vertex of $P$. So, $v$ is adjacent to two vertices in $P$, in addition to being adjacent to $x \notin P$. Hence, $\deg_G(v) \geq 3$, contradicting the fact that every vertex in $V \setminus \{u,w\}$ has degree $2$. So, $P$ contains every vertex of $G$. Moreover, every edge of $G$ must be an edge of $P$ (since any further edge would yield a cycle). Hence, we conclude that $G$ is a path. In particular, the two leaves $u$ and $w$ are the endpoints of this path.
\end{proof}

\begin{lemma}\label{one_cycle_leaf}
    Let $G=(V,E)$ be a connected graph containing exactly one cycle.
    Assume that $G$ itself is not a cycle.
    Then, $G$ has at least one leaf.
\end{lemma}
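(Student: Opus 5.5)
I would argue by contradiction. Suppose $G$ has no leaf, so every vertex has degree at least $2$; degree $0$ is already excluded because $G$ is connected and contains a cycle, so it has at least one edge. Let $C$ be the unique cycle of $G$. The plan is to show that $G$ must then equal $C$, contradicting the hypothesis that $G$ is not a cycle.

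The first step is to delete one edge $e_0$ of $C$. The graph $G-e_0$ is still connected, because the rest of $C$ joins the endpoints of $e_0$. It is also acyclic, because $C$ was the only cycle of $G$ and every cycle of $G-e_0$ would be a cycle of $G$ avoiding $e_0$. Hence $G-e_0$ is a tree, and in particular $|E| = |V|$. The degenerate cases fit the same argument. If $e_0$ is a loop, then $C$ consists of that loop alone. If $C$ is a pair of parallel edges, deleting one of them leaves the other, so the endpoints stay connected.

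Next I would apply the Handshaking Lemma as in the proof of Lemma~\ref{two_leaves_path}:
\[
2|V| = 2|E| = \sum_{v\in V}\deg_G(v) \ge 2|V|.
\]
Equality is forced, so every vertex has degree exactly $2$. (A loop contributes $2$ to its vertex's degree, so this count is consistent with the convention in Definition~\ref{firstdef}.)

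It remains to show that a connected graph in which every vertex has degree $2$ is a cycle, and I expect this to be the main obstacle, since it needs some care. Suppose some vertex $x$ lies off $C$. Because $G$ is connected, there is an edge from a vertex $v$ of $C$ to a vertex outside $C$. The vertex $v$ already has two incident edges on $C$ (or a single loop if $C$ is a loop), so this extra edge gives $\deg_G(v)\ge 3$, a contradiction. Hence $V(C)=V$. Now $C$ has exactly $|V(C)| = |V|$ edges, and $|E|=|V|$, so $E = E(C)$ and $G = C$. This contradicts the assumption that $G$ is not a cycle, so $G$ has at least one leaf.
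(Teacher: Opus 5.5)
Your proof is correct, but it takes a different route from the paper's. The paper also deletes a cycle edge $e$ to obtain a tree $G-e$, and then argues structurally. The tree $G-e$ has at least two leaves. Since $G$ has minimum degree at least $2$, these leaves must be exactly the two endpoints of $e$, and there can be no others. So $G-e$ is a tree with exactly two leaves, hence a path by Lemma~\ref{two_leaves_path}, and re-adding $e$ closes this path into a cycle.

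You use the tree $G-e_0$ only to extract the count $|E|=|V|$. The Handshaking Lemma then forces every degree to equal $2$. You finish with a local degree argument at a vertex where $G$ would leave $C$, together with the count $|E(C)|=|V(C)|=|V|=|E|$.

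Your argument is more self-contained in two ways. It never needs Lemma~\ref{two_leaves_path}. It also avoids the fact that a tree has at least two leaves, which the paper uses tacitly and which requires $|V|\ge 2$. That condition holds automatically here, since a one-vertex graph with exactly one cycle is a single loop and hence a cycle. Your argument also handles loops and parallel edges uniformly. The paper's route, in exchange, reuses the preceding lemma and yields a bit more structure, namely that $G-e$ is a path through all vertices of $G$.
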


\begin{proof}
    Suppose towards contradiction that $G$ has no leaves. Since $G$ is connected, no vertex in $G$ has degree $0$; and so, since $G$ has no leaves, every vertex has degree at least two.

    Let $C$ denote the edge set of the unique cycle in $G$. Choose an edge $e \in C$. Then $G-e$ is an acyclic connected graph (acyclic because we broke the only cycle of $G$; connected because removing an edge in a cycle cannot break connectivity),
    and hence a tree. So, $G-e$ has at least two leaves. Pick two, and call them $u$ and $w$. That is, \begin{align*}
        \deg_{G-e}(u)=\deg_{G-e}(w)=1.
    \end{align*} 
    
    Since every vertex in $G$ has degree at least two, the degrees of $u$ and $w$ must have decreased when $e$ was removed. And so, $u$ and $w$ are precisely the endpoints of $e$
    (and thus, $e$ is not a loop).

    We claim that $G-e$ has no other leaves. Indeed, if $x \notin \set{u,w}$ were a leaf of $G-e$, then removing $e$ would not affect the degree of $x$, since $x$ is not an endpoint of $e$. Hence, we would have \begin{align*}
        \deg_G(x)=\deg_{G-e}(x)=1,
    \end{align*} contradicting our assumption that $G$ has no leaves.

    And so, $G-e$ is a tree with exactly two leaves $u$ and $w$. By Lemma~\ref{two_leaves_path}, it follows that $G-e$ is a path. In particular, $u$ and $w$ are the endpoints of this path. Since $e = \set{u,w}$, adding edge $e$ joins the two endpoints of this path, so $G$ is a cycle. But this contradicts our assumption that $G$ itself was not a cycle. And so, we conclude that $G$ has at least one leaf.
\end{proof}

\begin{lemma}\label{odd_cycle_components}
    Let $G=(V,E)$ be a connected graph containing exactly one cycle, and suppose that cycle is odd. Then, the family of corresponding signless incidence vectors $\tup{\mathbf{b}_e : e \in E}$ is linearly independent in $\mathbb{R}^V$.
\end{lemma}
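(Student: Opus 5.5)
We will prove the statement by induction on the number of edges, stripping leaves one at a time until only the odd cycle remains. This mirrors the proof of Lemma~\ref{forests_independence}, with Lemma~\ref{odd_cycles_independence} serving as the base case instead of the empty forest.

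First suppose $G$ itself is a cycle. Its edge set $E$ is then the edge set of an odd cycle, so Lemma~\ref{odd_cycles_independence} gives that $\tup{\mathbf{b}_e : e \in E}$ is linearly independent. This includes the case of a single loop, where $\mathbf{b}_e = 2\mathbf{e}_v \neq \mathbf{0}$. Here we should check that Lemma~\ref{odd_cycles_independence} covers the degenerate case $k=0$; if it does not, we verify the loop case directly.

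Now suppose $G$ is not a cycle. By Lemma~\ref{one_cycle_leaf}, $G$ has a leaf $v$, incident to a unique edge $e_0$. This edge is not a loop, since a loop would contribute $2$ to $\deg_G(v)$. Take a vanishing linear combination $\sum_{e\in E} a_e \mathbf{b}_e = \mathbf{0}$. The $v$-coordinate of every $\mathbf{b}_e$ with $e \neq e_0$ is zero, and that of $\mathbf{b}_{e_0}$ is $1$, so the $v$-coordinate of the combination is $a_{e_0}$; hence $a_{e_0}=0$.

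Next consider $G' := G - v$, obtained by deleting $v$ and $e_0$. It is connected, because removing a leaf does not disconnect the graph. It still contains exactly one cycle, and that cycle is odd: $e_0$ lies on no cycle (as $v$ has degree $1$), so the cycle of $G$ survives, and no new cycles appear. Moreover, $G'$ has fewer edges than $G$. The induction hypothesis, applied to $G'$ regarded in $\mathbb{R}^{V\setminus\{v\}}$, which embeds naturally in $\mathbb{R}^V$, gives that $\tup{\mathbf{b}_e : e \in E\setminus\{e_0\}}$ is linearly independent. Since $a_{e_0}=0$, the remaining relation $\sum_{e \neq e_0} a_e\mathbf{b}_e = \mathbf{0}$ forces $a_e = 0$ for every $e$, which proves the lemma.

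The delicate step is confirming that $G'$ still satisfies the hypotheses, namely that it is connected with exactly one cycle and that this cycle is odd. The induction must also be set up so that the base case is exactly ``$G$ is a cycle''. Because every graph satisfying the hypotheses is either a cycle or has a leaf by Lemma~\ref{one_cycle_leaf}, the two cases together are exhaustive.
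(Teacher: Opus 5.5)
Your proposal is correct and follows the paper's proof essentially exactly. The odd cycle is the base case via Lemma~\ref{odd_cycles_independence}; otherwise Lemma~\ref{one_cycle_leaf} supplies a leaf, which is peeled off as in Lemma~\ref{forests_independence} while connectivity and the unique odd cycle are preserved. You also make explicit two small points the paper leaves implicit: deleting the leaf vertex together with $e_0$ is what keeps $G'$ connected, and the loop case $k=0$ is covered because the relation there reads $2c_0=0$.
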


\begin{proof}
    First, if $G$ is itself an odd cycle (that is, $E$ is the edge set of an odd cycle), then the claim follows from Lemma~\ref{odd_cycles_independence}.

    Otherwise, suppose that $G$ is not a cycle. Then $G$ has at least one leaf by Lemma~\ref{one_cycle_leaf}. We now proceed exactly as in the proof of Lemma~\ref{forests_independence}, applying the same inductive reduction argument, with the only difference being that the base case is the odd cycle rather than a single edge
    (removing a leaf does not affect the cycles of our graph, nor its connectivity; thus, the assumptions of the lemma are preserved in this process).
\end{proof}

\begin{lemma}\label{ood_pseudoforest_independence}
    If $G=(V,E)$ is an odd pseudoforest, then the family of corresponding signless incidence vectors $\tup{\mathbf{b}_e : e \in E}$ is linearly independent in $\mathbb{R}^V$.
\end{lemma}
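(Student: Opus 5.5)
The plan is to decompose $G$ into its connected components and reduce to the two cases already handled: Lemma~\ref{forests_independence} for trees and Lemma~\ref{odd_cycle_components} for unicyclic components with an odd cycle. Write the components of $G$ as $G_1=(V_1,E_1),\ldots,G_k=(V_k,E_k)$. By definition of an odd pseudoforest, each $G_i$ is connected and contains at most one cycle, and that cycle (if any) is odd. So each $G_i$ is either a tree or a connected graph with exactly one cycle, which is odd.

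First I would show that, for each $i$, the family $\tup{\mathbf{b}_e : e\in E_i}$ is linearly independent in $\mathbb{R}^V$. If $G_i$ is acyclic, then $E_i$ is the edge set of a forest in $G$, and Lemma~\ref{forests_independence} applies directly. If $G_i$ has exactly one cycle and it is odd, then Lemma~\ref{odd_cycle_components} applies to $G_i$ regarded as a graph on $V_i$. The vectors $\mathbf{b}_e$ for $e\in E_i$ are supported on the coordinates in $V_i$, so independence in $\mathbb{R}^{V_i}$ is the same as independence in $\mathbb{R}^V$ under the coordinate embedding. A loop forms a cycle of length $1$, which is odd, so loops are covered by this case (indeed $\mathbf{b}_e=2\mathbf{e}_v\neq\mathbf{0}$).

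Next I would combine the components. Suppose $\sum_{e\in E}a_e\mathbf{b}_e=\mathbf{0}$. The sets $V_1,\ldots,V_k$ are pairwise disjoint, and every vector $\mathbf{b}_e$ with $e\in E_i$ vanishes outside the coordinates in $V_i$. Restricting the relation to the coordinates in $V_i$ therefore gives $\sum_{e\in E_i}a_e\mathbf{b}_e=\mathbf{0}$. By the previous step, $a_e=0$ for all $e\in E_i$. Since this holds for every $i$, and $E=\bigsqcup_i E_i$, all coefficients vanish.

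There is no serious obstacle here. The only point needing care is that the support argument is valid, namely that the edge sets of distinct components touch disjoint vertex sets, and that isolated vertices (components with no edges) contribute nothing. Both follow immediately from the definition of connected components.
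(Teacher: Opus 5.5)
Your proposal is correct and follows the paper's proof essentially step for step. You split $G$ into connected components, apply Lemma~\ref{forests_independence} to tree components and Lemma~\ref{odd_cycle_components} to unicyclic components with an odd cycle, and then combine by restricting any linear relation to the disjoint coordinate blocks $V_i$. Your remark that a loop is an odd cycle of length $1$ (the $k=0$ case of Lemma~\ref{odd_cycles_independence}) is a point the paper leaves implicit.
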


\begin{proof}
    Suppose $G=(V,E)$ is an odd pseudoforest. Then we can decompose $G$ into disjoint connected components \begin{align*}
        G_1,\ldots,G_m, \quad G_i = (V_i,E_i),
    \end{align*} where each $G_i$ contains at most one cycle, and this cycle is odd. That is, each $G_i$ is either a tree or contains exactly one cycle, which is odd.

    Consider a connected component $G_i$. If $G_i$ is a tree, then $G_i$ is also a forest; hence, the list of signless incidence vectors $\tup{\mathbf{b}_e : e \in E_i}$ is linearly independent in $\mathbb{R}^{V_i}$ by Lemma~\ref{forests_independence}. On the other hand, if $G_i$ contains exactly one cycle, and this cycle is odd, then the list of signless incidence vectors $\tup{\mathbf{b}_e : e \in E_i}$ is linearly independent in $\mathbb{R}^{V_i}$ by Lemma~\ref{odd_cycle_components}.

    In both cases, each component contributes a linearly independent family of signless incidence vectors. Since the connected components $G_1,\ldots,G_m$ are vertex-disjoint, their supports in $\mathbb{R}^V$ are disjoint. Extending each vector $\mathbf{b}_e \in \mathbb{R}^{V_i}$ to $\mathbb{R}^V$ by setting the entries outside $V_i$ to be zero, we may view each list $\tup{\mathbf{b}_e : e \in E_i}$ as a list of vectors in $\mathbb{R}^V$. Hence, the list \begin{align*}
        \tup{\mathbf{b}_e : e \in E} = \bigcup_{i=1}^m\tup{\mathbf{b}_e : e \in E_i}
    \end{align*} is a union of linearly independent lists with disjoint supports, so that $\tup{\mathbf{b}_e : e \in E}$ is linearly independent in $\mathbb{R}^V$
    (since any linear dependence relation $\sum_{e \in E} a_e \mathbf{b}_e = 0$ can be projected down on each $\mathbb{R}^{V_i}$ to yield a linear dependence relation $\sum_{e \in E_i} a_e \mathbf{b}_e = 0$, which entails $a_e = 0$ for all $e \in E_i$ by the linear independence of the family $\tup{\mathbf{b}_e : e \in E_i}$).
\end{proof}

We can now prove Theorem~\ref{pseudoforests_theorem}:

\begin{proof}[Proof of Theorem~\ref{pseudoforests_theorem}]
    Fix a finite graph $G=(V,E)$.
    Suppose $H=(V,F)$ is an odd pseudoforest in $G$. By Lemma~\ref{ood_pseudoforest_independence}, the list of signless incidence vectors $\tup{\mathbf{b}_e : e \in F}$ is linearly independent in $\mathbb{R}^V$. Hence, $F \in \mathcal{I}(X_G)$.

    And so, since $H=(V,F)$ was an arbitrary odd pseudoforest in $G$, we have that \begin{align}\label{pseudo_subset}
        \mathcal{I}(X_G)\supseteq \set{F \subseteq E : F \text{ is the edge set of an odd pseudoforest}}.
    \end{align} Consequently, \begin{align}
        \abs{\mathcal{I}(X_G)}  &\geq \abs{\set{F \subseteq E : F \text{ is the edge set of an odd pseudoforest}}} \nonumber \\
        &= (\# \text{ of odd pseudoforests in $G$})\label{pseudo_ineq}
    \end{align} (since odd pseudoforests -- like any spanning subgraphs -- are uniquely determined by their edge sets). 
    
    Hence, \begin{align*}
        &\tup{\# \text{ of degree sequences $\deg_H$ of spanning subgraphs $H \subseteq G$}} \\
        &= \abs{\Sigma(X_G)} \quad \textcolor{gray}{\text{(by \eqref{sum_equals_degs})}} \\
        &\geq \abs{\mathcal{I}(X_G)} \quad \textcolor{gray}{\text{(by Lemma~\ref{vector-list})}} \\
        &\geq (\#\text{ of odd pseudoforests in $G$}) \quad \textcolor{gray}{\text{(by \eqref{pseudo_ineq})}}.
    \end{align*} This proves Theorem~\ref{pseudoforests_theorem}.
\end{proof}

\subsection{Characterization of $\mathcal{I}(X_G)$}

We can actually strengthen the subset relation in \eqref{pseudo_subset} to an equality as follows:

\begin{theorem}\label{I=pseduo}
    Let $G=(V,E)$ be an undirected graph. Then,
    \[\mathcal{I}(X_G) = \set{F \subseteq E : F \text{ is the edge set of an odd pseudoforest}}.\]
\end{theorem}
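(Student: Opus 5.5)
The inclusion $\supseteq$ is exactly \eqref{pseudo_subset}, which follows from Lemma~\ref{ood_pseudoforest_independence}. So the plan is to prove the reverse inclusion $\subseteq$. Concretely, we show that if $F \subseteq E$ is \emph{not} the edge set of an odd pseudoforest, then $\tup{\mathbf{b}_e : e \in F}$ is linearly dependent.

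Since linear dependence of a sublist implies dependence of the whole list, it suffices to find a small dependent substructure inside $(V,F)$. The first step is a structural dichotomy. If $(V,F)$ is not an odd pseudoforest, then either some component contains an even cycle, or some component contains two distinct cycles which are both odd. A loop counts as an odd cycle of length $1$, and two parallel edges form an even cycle of length $2$. We then treat the two cases separately.

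\textbf{Even cycles.} For an even cycle $e_0,\ldots,e_{2k-1}$ through $w_0,\ldots,w_{2k-1}$, the alternating sum $\sum_i (-1)^i \mathbf{b}_{e_i}$ vanishes. Each vertex $w_i$ is hit exactly by $e_{i-1}$ and $e_i$, which carry opposite signs, and the wrap-around works because the length is even. This is the computation in Lemma~\ref{odd_cycles_independence} run in reverse.

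\textbf{Two odd cycles in one component.} Take two distinct odd cycles $C_1, C_2$ in the same component. If they share at least two vertices, or share a path, then their union contains an even cycle. The standard theta-graph argument gives this: the three internally disjoint paths between two branch vertices have lengths whose pairwise sums include an even one. In that case we reduce to the even-cycle case. Otherwise, $C_1$ and $C_2$ share at most one vertex, and they are joined by a path $P$, which has length $0$ if they share a vertex.

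In this case I would write down an explicit dependence. Each odd cycle $C$ through a base vertex $w$ admits an alternating combination $\sum_{e\in C} \pm\mathbf{b}_e = 2\mathbf{e}_w$: alternate signs starting and ending with $+$ at the two edges incident to $w$. Oddness makes the two end signs agree. Along $P$ from $w_1$ to $w_2$, an alternating combination gives $\mathbf{e}_{w_1} \pm \mathbf{e}_{w_2}$, with the sign depending on the parity of the length of $P$. Combining these gives $2\mathbf{e}_{w_1} - 2\mathbf{e}_{w_2} - 2\sum_P (\pm)\mathbf{b}_e = 0$ with suitable signs, or directly $2\mathbf{e}_{w}-2\mathbf{e}_w=0$ when the cycles share the vertex $w$. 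The coefficients on the cycle edges are nonzero, so this is a nontrivial dependence.

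The step I expect to be the main obstacle is making the combinatorial dichotomy rigorous in the multigraph setting. One must show that a connected graph with at least two cycles, all odd, contains two odd cycles meeting in at most one vertex, and handle loops and parallel edges uniformly. I would argue this by taking a shortest pair of distinct cycles and invoking the theta-graph parity argument above. The linear-algebra sign bookkeeping afterwards is routine.
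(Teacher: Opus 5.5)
Your proposal is correct, and the even-cycle case is the paper's Lemma~\ref{even_cycles_dependence}; the difference is in how you handle a component containing two cycles. The paper does not look at parities or at how the cycles sit there. Lemma~\ref{twoplus_cycles} is a pure dimension count. A spanning tree of the component $H'=(W,F')$ has $|W|-1$ edges, and each extra edge closes exactly one cycle, so two cycles force $|F'|\ge |W|+1$. All $\mathbf{b}_e$ with $e\in F'$ lie in the $|W|$-dimensional subspace of vectors supported on $W$, so they are dependent. This is shorter and avoids everything you flagged as the main obstacle: the theta-graph parity step, the choice of connecting path, and loops and parallel edges. Its cost is that it gives no explicit relation. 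Your route instead exhibits the minimal dependencies themselves: even cycles, and ``handcuffs'' (two odd cycles sharing one vertex or joined by a path). These are exactly the circuits of this vector configuration, so you prove more than the theorem needs.

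Two remarks on carrying your route out. First, the obstacle is smaller than you expect, and no ``shortest pair'' argument is needed. Extend an edge of $C_2\setminus C_1$ to a maximal subpath of $C_2$ whose interior avoids $V(C_1)$; its ends are two distinct vertices of $C_1$. Your own theta argument then shows that, when all cycles are odd, any two distinct cycles share at most one vertex. Second, take $P$ to be a shortest path from $V(C_1)$ to $V(C_2)$, so that it is edge-disjoint from both cycles and cannot cancel their $\pm1$ coefficients. If $P$ has edges $f_0,\dots,f_{\ell-1}$ in order from $w_1$, the relation is $\sum_{C_1}\pm\mathbf{b}_e-(-1)^{\ell}\sum_{C_2}\pm\mathbf{b}_e-2\sum_{j}(-1)^{j}\mathbf{b}_{f_j}=\mathbf{0}$.
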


This gives a complete characterization of the linearly independent subfamilies associated with the signless incidence vectors of a graph, showing that they correspond exactly to the edge sets of odd pseudoforests in $G$.

The proof of Theorem~\ref{I=pseduo} will rely on the following two lemmas:

\begin{lemma}\label{even_cycles_dependence}
    Let $G=(V,E)$ be an undirected graph. If $C \subseteq E$ is the edge set of an even cycle in $G$, then the corresponding signless incidence vectors $\tup{\mathbf{b}_e : e \in C}$ are linearly dependent  in $\mathbb{R}^V$.
\end{lemma}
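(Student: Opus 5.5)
The plan is to exhibit an explicit nontrivial vanishing linear combination with alternating signs around the cycle, mirroring the labeling in the proof of Lemma~\ref{odd_cycles_independence}. Write $|C| = 2k$ with $k \geq 1$. Label the vertices of the cycle $w_0, w_1, \ldots, w_{2k-1}$, and let $e_i \in C$ join $w_i$ and $w_{i+1}$ for $i = 0, \ldots, 2k-2$, while $e_{2k-1}$ joins $w_{2k-1}$ and $w_0$. Indices are read modulo $2k$. The candidate relation is
\begin{align*}
\sum_{i=0}^{2k-1} (-1)^i \mathbf{b}_{e_i} = \mathbf{0}.
\end{align*}
Its coefficients are all $\pm 1$, so it is nontrivial.

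To verify this relation, I would check each coordinate of $\mathbb{R}^V$.
\begin{enumerate}
\item A vertex $v$ not on the cycle gets contribution $0$ from every $\mathbf{b}_{e_i}$.
\item A cycle vertex $w_i$ is an endpoint of exactly two edges of $C$, namely $e_{i-1}$ and $e_i$ (with $e_{-1} := e_{2k-1}$). Each contributes $1$ to the $w_i$-coordinate, so the $w_i$-coordinate of the sum is $(-1)^{i-1} + (-1)^i = 0$.
\item The wrap-around vertex $w_0$ needs separate care, since there the consecutive-index reasoning uses $e_{2k-1}$. Its coefficient is $(-1)^{2k-1} = -1$, which is opposite to the coefficient $+1$ of $e_0$. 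This is exactly where evenness of the cycle is used.
\end{enumerate}
So every coordinate vanishes, and the family $\tup{\mathbf{b}_e : e \in C}$ is linearly dependent.

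The main point needing attention is degenerate cycles. The case $k = 1$ is a cycle of length $2$, i.e.\ two parallel edges $e_0, e_1$ between $w_0$ and $w_1$. Then $\mathbf{b}_{e_0} = \mathbf{b}_{e_1}$, and the formula still gives $\mathbf{b}_{e_0} - \mathbf{b}_{e_1} = \mathbf{0}$. I would note that this case is covered by the same argument, because cycle vertices are distinct, so the incidences are exactly as described. Loops cannot occur, since a loop is a cycle of length $1$, which is odd. With that remark, the proof is complete.
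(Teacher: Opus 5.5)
Your proposal is correct and is essentially the paper's proof: the same labeling, the same alternating-sign relation $\sum_{i=0}^{2k-1}(-1)^i\mathbf{b}_{e_i}=\mathbf{0}$, and the same coordinate-by-coordinate check. Your separate treatment of the wrap-around vertex $w_0$ (the one place evenness is used) and of the length-$2$ case is slightly more careful than the paper, which folds $w_0$ into the generic computation via $e_{-1}:=e_{2k-1}$.
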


\begin{remark}
    This is in contrast to Lemma~\ref{odd_cycles_independence}, in which the signless incidence vectors corresponding to an odd cycle are linearly \textit{independent} in $\mathbb{R}^{V}$.
\end{remark}

\begin{proof}[Proof of Lemma~\ref{even_cycles_dependence}]
    Suppose $|C|=2k$ for some $k \geq 1$. Label the vertices of the cycle $w_0,w_1,\ldots,w_{2k-1}$. Let $e_i \in C$ denote the edge joining vertices $w_i$ and $w_{i+1}$ for $i=0,1,\ldots,2k-2$, and let $e_{2k-1} \in C$ denote the edge joining vertices $w_{2k-1}$ and $w_0$.

    Consider the following sum: \begin{align*}
        \sum_{i=0}^{2k-1}\tup{-1}^{i}\mathbf{b}_{e_i}.
    \end{align*} Fix a vertex $w_i$ in the cycle. As in the proof of Lemma~\ref{odd_cycles_independence}, the only edges of $C$ incident to $w_i$ are $e_{i-1}$ and $e_i$ (with $e_{-1} :=e_{2k-1}$). Since each corresponding signless incidence vector has a $1$ in the $w_i$-coordinate, the $w_i$-coordinate of the above sum equals \begin{align*}
        (-1)^{i-1}+(-1)^i = 0.
    \end{align*} Since $w_i$ was arbitrary, we see that every coordinate of $\sum_{i=0}^{2k-1}(-1)^{i}\mathbf{b}_{e_i}$ is $0$ (since the coordinates corresponding to vertices not on the cycle $C$ are clearly $0$). Hence, \begin{align*}
        \sum_{i=0}^{2k-1}(-1)^{i}\mathbf{b}_{e_i} = \mathbf{0}
    \end{align*} Since the coefficients $\tup{-1}^i$ are not all zero, this is a nontrivial relation among the vectors $\tup{\mathbf{b}_e : e \in C}$. Thus, $\tup{\mathbf{b}_e : e \in C}$ is linearly dependent.
\end{proof}

\begin{lemma}\label{twoplus_cycles}
    Let $G=(V,E)$ be a finite graph, and let $H=(W,F)$ be a connected subgraph with $F \subseteq E$ and $W \subseteq V$. If $H$ contains at least two cycles, then the corresponding signless incidence vectors $\mathbf{b}_e$ for $e \in F$ are linearly dependent in $\mathbb{R}^{V}$.
\end{lemma}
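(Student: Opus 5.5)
The plan is a dimension count. Let $H=(W,F)$ be connected and contain at least two cycles. Every $\mathbf{b}_e$ with $e\in F$ has both endpoints in $W$, so all these vectors lie in the coordinate subspace $\mathbb{R}^W\subseteq\mathbb{R}^V$, which has dimension $|W|$. It therefore suffices to show $|F|>|W|$: then $(\mathbf{b}_e : e\in F)$ is a list of more than $\dim\mathbb{R}^W$ vectors in $\mathbb{R}^W$, hence linearly dependent, and dependence in $\mathbb{R}^W$ is the same as dependence in $\mathbb{R}^V$.

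To prove $|F|\ge |W|+1$, take a spanning tree $T$ of the connected graph $H$, so $|E(T)|=|W|-1$ by Lemma~\ref{forest_iff}. It is enough to show that at least two edges of $F$ lie outside $T$.

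\begin{itemize}
\item If only one edge $f$ lay outside $T$, then $H=T+f$.
\item In $T+f$, every cycle must use $f$, since $T$ is acyclic.
\item A cycle through $f$ is then $f$ together with a path in $T$ between the endpoints of $f$. If $f$ is a loop, the cycle is just $f$.
\item That tree path is unique, so $T+f$ has exactly one cycle.
\end{itemize}

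This contradicts the assumption that $H$ has at least two cycles. Hence $|F\setminus E(T)|\ge 2$, and so $|F|\ge |W|+1$.

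The main subtlety is this uniqueness step. We need to check that "exactly one cycle" holds under the paper's convention that cyclic rotations are identified. We must also handle parallel edges and loops: a pair of parallel edges forms a $2$-cycle, and a loop forms a $1$-cycle. The tree-path argument above covers both cases. Once $|F|\ge|W|+1$ is in hand, the dimension count in the first paragraph finishes the proof.
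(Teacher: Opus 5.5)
Your proposal is correct and follows the paper's proof essentially verbatim: take a spanning tree $T$ of $H$, argue that two cycles force at least two edges of $F\setminus E(T)$ (so $|F|\ge |W|+1$), and conclude by the dimension count in the $|W|$-dimensional coordinate subspace $\mathbb{R}^W\subseteq\mathbb{R}^V$. Your unique-tree-path argument spells out a step the paper states only briefly; you should also note that the case $F\setminus E(T)=\emptyset$ is excluded (trivially, since then $H=T$ is acyclic).
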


\begin{proof}
Since $H$ is connected, it has a spanning tree whose edge set we denote by $T$. Hence, $|T| = |W|-1$. Now, every edge in $F \setminus T$ creates a unique cycle when added to $T$ (indeed, it creates a cycle because $T$ already has a path between its two endpoints; but it cannot create more than one cycle, since this path is unique). Since $H$ contains at least two cycles, it must therefore contain at least two distinct edges in $F \setminus T$. Thus, \begin{align*}
    |F| \geq |T|+2=(|W|-1)+2 = |W|+1 > |W|.
\end{align*}

Now, by abuse of notation, we let $\mathbb{R}^W$ denote the vector subspace of $\mathbb{R}^V$ that consists of all vectors supported on $W$ (that is, whose $v$-coordinate for each $v \in V \setminus W$ is $0$).
This subspace is $|W|$-dimensional, and all the $|F|$ many vectors $\tup{\mathbf{b}_e : e \in F}$ lie in it (since each edge $e \in F$ has both its endpoints in $W$).
However, $|F| > |W|$. Hence, the $|F|$ many vectors $\tup{\mathbf{b}_e : e \in F}$ are linearly dependent in $\mathbb{R}^W$ (since any family of more than $|W|$ vectors in the $|W|$-dimensional vector space $\mathbb{R}^W$ must necessarily be linearly dependent).
In other words, they are linearly dependent in the larger space $\mathbb{R}^V$ as well.
\end{proof}

We can now prove Theorem~\ref{I=pseduo}:

\begin{proof}
    \eqref{pseudo_subset} yields the ``$\supseteq$'' direction. Let us show the ``$\subseteq$" direction; that is, 
        \begin{align*}\mathcal{I}(X_G) \subseteq \set{F \subseteq E : F \text{ is the edge set of an odd pseudoforest}}.
    \end{align*} To do so, we will prove the contrapositive: if $F \subseteq E$ is not the edge set of an odd pseudoforest, then $\tup{\mathbf{b}_e : e \in F}$ is linearly dependent.

    Let $H=(V,F)$ be a spanning subgraph of $G$ such that $F \subseteq E$ is not the edge set of an odd pseudoforest. Then either $H$ contains an even cycle, or some connected component of $H$ contains at least two cycles.

    First, suppose $H$ contains an even cycle. Let $C \subseteq F$ be the edge set of the even cycle. By Lemma~\ref{even_cycles_dependence}, the family $\tup{\mathbf{b}_e : e \in C}$ is linearly dependent in $\mathbb{R}^V$. Since $C \subseteq F$ and linear dependence is inherited by supersets, the family $\tup{\mathbf{b}_e : e \in F}$ is linearly dependent in $\mathbb{R}^V$. That is, $F \notin \mathcal{I}(X_G)$.

    On the other hand, suppose some connected component of $H$ contains at least two cycles. Call this component $H' = (V',F')$. By Lemma~\ref{twoplus_cycles}, the family $\tup{\mathbf{b}_e : e \in F'}$ is linearly dependent in $\mathbb{R}^V$.
    Since $F' \subseteq F$ and since linear dependence is inherited by supersets, the family $\tup{\mathbf{b}_e : e \in F}$ is linearly dependent in $\mathbb{R}^V$. That is, $F \notin \mathcal{I}(X_G)$.

    And so, we see that if $F\subseteq E$ is not the edge set of an odd pseudoforest, then $F \notin \mathcal{I}(X_G)$. Equivalently, if $F \in \mathcal{I}(X_G)$, then $F$ is the edge set of an odd pseudoforest. And so,  \begin{align*}
        \mathcal{I}(X_G) \subseteq \set{F \subseteq E : F \text{ is the edge set of an odd pseudoforest}}.
    \end{align*} Together with (\ref{pseudo_subset}), we conclude that \begin{align*}
        \mathcal{I}(X_G) = \set{F \subseteq E: F \text{ is the edge set of an odd pseudoforest}}.
    \end{align*} This proves Theorem~\ref{I=pseduo}.
\end{proof}

\section*{Acknowledgements} The author thanks Darij Grinberg for introducing this project, for many helpful comments and guidance throughout its development, and for pointing out numerous relevant sources. The author thanks Alexander Postnikov for insights concerning the connections between degree sequences, score functions, and zonotopes (the latter of which ultimately did not play a role in the above work). The author also thanks Sergei Shteiner for pointing out the alternative proof in \cite{KleitmanWinston}. Finally, the author thanks V\'aclav Rozho\v{n} for drawing attention to the AI-generated proof of \eqref{conj} that has been presented above.

Parts of the writing in Sections~\ref{sec.nbp} and~\ref{sec.opf} are generated by GPT-5.5, although such material has been carefully proofread and partially rewritten by the author.



\nocite{*}
\bibliography{references}

\end{document}